\newtheorem{theorem}{Theorem}[section]
\newtheorem{lemma}[theorem]{Lemma}
\newtheorem{proposition}[theorem]{Proposition}
\newtheorem{definition}{Definition}
\theoremstyle{remark}
\newtheorem{remark}{Remark}[section]
\numberwithin{equation}{section}
\theoremstyle{plain}
\newcommand{\comments}[1]{}
\newcommand{\Z}{\mathbb Z}
\DeclareMathOperator*{\esssup}{ess\,sup}
\newcommand{\D}{\displaystyle }
\newcommand{\dt}{{\D\frac{d}{dt}}}
\newcommand{\ra}{\rightarrow}
\newcommand{\lra}{\longrightarrow}
\newcommand{\be}{\begin{equation}}
\newcommand{\ee}{\end{equation}}
\newcommand{\bes}{\begin{equation*}}
\newcommand{\ees}{\end{equation*}}
\newcommand{\bea}{\begin{eqnarray}}
\newcommand{\eea}{\end{eqnarray}}
\newcommand{\beas}{\begin{eqnarray*}}
\newcommand{\eeas}{\end{eqnarray*}}
\def\mbf#1{\mathbf{#1}}
\def\<{\langle} 
\def\>{\rangle}
\title[Regularity criterion for 3D NSE based on finitely many observations]{A Novel Regularity Criterion For The three-dimensional  Navier-Stokes Equations Based On Finitely many observations}
\author{Abhishek Balakrishna}
\address{ (A. Balakhrishna) 
	Department of Mathematics and Statistics\\ 
	University of Maryland Baltimore County  \\ 
	Baltimore, MD 21250\\
	USA}
\email{bala2@umbc.edu}
\author{
	Animikh Biswas$^\dagger$}
\address{ (Animikh Biswas) 
	Department of Mathematics and Statistics\\ 
	University of Maryland Baltimore County  \\ 
	Baltimore, MD 21250\\
	USA}
\email{abiswas@umbc.edu}
\thanks{$^\dagger$ Corresponding author. Email: abiswas@umbc.edu}
\subjclass[2010]{Primary 35Q30; 93C20 Secondary 35Q35; 76B75}
\newcommand*{\rom}[1]{\expandafter\@slowromancap\romannumeral #1@}
\begin{document}
 \maketitle
 
 \begin{abstract}
     In this paper we present two results: (1) A data assimilation algorithm for the 3D Navier-Stokes equation (3D NSE) using nodal data, and, as a consequence (2) a novel regularity criterion for 
     the 3D NSE based on finitely many observations of the velocity. The data assimilation algorithm we employ utilizes nudging, a method  based on a Newtonian relaxation scheme motivated by feedback-control. The observations, which may be either modal, nodal or volume elements,  are drawn from a weak solution of the 3D NSE and are collected \emph{almost everywhere} in time over a finite grid and our results, including the regularity criterion,  hold for data of any of the aforementioned forms. The regularity criterion we propose follows  from our data assimilation algorithm and is hence intimately connected to the notion of determining functionals (modes, nodes and volume elements). To the best of our knowledge, all existing regularity criteria require knowing the solution of the 3D NSE almost everywhere in space. Our regularity criterion is \emph{fundamentally different} from any preexisting regularity criterion as it is based on \emph{finitely many observations}  (modes, nodes and volume elements). We further prove that the regularity criterion we propose is both a \emph{necessary and sufficient condition for regularity}. Thus our result can be viewed as a natural generalization of the notion of determining modes, nodes and volume elements as well as the asymptotic tracking property of the nudging algorithm  for the 2D NSE to the 3D setting.
 \end{abstract}
 
    \section{Introduction}
    Consider the bounded domain $\Omega$, with boundary $\partial \Omega$, as a subset of $\mathbb{R}^3$. Assume $\Omega$ to be filled with a viscous incompressible fluid modeled by the 3D Navier-Stokes equation
    \begin{equation}\label{1}
\begin{split}
    \frac{\partial u}{\partial t}+(u\cdot\nabla)u-\nu\Delta u+\nabla p&=f\\
    \nabla\cdot u&=0\\
    u|_{t=0}&=u_0.
\end{split}
\end{equation}
 Foe simplicity, we take $\Omega=[0,L]^3$ and the boundary condition is taken to be periodic.
 Here, $u$ is the unknown fluid velocity, $p$ is the unknown pressure and $f$ is a given external force. For the purpose of this introduction, $f$ is taken to be smooth ($C^{\infty}(\Omega)$).
 
 The question of well-posedness for system \eqref{1} has been of great interest to mathematicians. In his seminal paper \cite{Leray} Leray proved the global existence of weak solutions for the system \eqref{1} for all divergence free initial data $u_0\in L^2(\Omega)$. Furthermore, weak solutions that obey the energy inequality \eqref{lerener} are called Leray-Hopf weak solutions. The uniqueness of Leray-Hopf weak solution was an open problem until recently. Results by Vicol and Buckmaster \cite{vicb} and Albritton, Brué and Colombo \cite{colombo} demonstrated the non-uniqueness of weak and Leray-Hopf weak solutions respectively. The question of whether all Leray-Hopf weak solutions with smooth initial data are \emph{regular} is still open to the best of our knowledge and is a millennium problem. A solution $u$ is said to be regular if $\|u(t)\|_{H^1}$ is uniformly bounded in time. As opposed to the case of weak solutions, the global existence of a regular solution is still an open problem. We only know certain partial results. For example, we have local in time existence of a regular solution with the time of existence depending on the "size" of $f$ and $u_0$. 
 
 Despite this lack of well-posedness in the classical sense, several regularity criteria have been found. Such results specify additional conditions under which a Leray-Hopf weak solution is regular. The first important regularity criterion is due to Serrin \cite{Serrin}: If $u$ is a weak solution that satisfies 
 \begin{equation}\label{lad}
     u\in L^s(0,T;L^q),~\text{ with }~~2<s<\infty, ~~3<q<\infty, ~~\frac{3}{q}+\frac{2}{s}<1,
 \end{equation}
then $u$ is regular on $(0,T)$. This result was further improved by showing that $\frac{3}{q}+\frac{2}{s}<1$ in \eqref{lad} can be replaced by the weaker condition $\frac{3}{q}+\frac{2}{s}\leq 1$. The equality yields the largest class, providing us with a more general regularity criterion : If $u$ is a weak solution that satisfies 
 \begin{equation}\label{lad1}
     u\in L^s(0,T;L^q),~\text{ with }~~2<s<\infty, ~~3<q<\infty, ~~\frac{3}{q}+\frac{2}{s}=1,
 \end{equation}
then $u$ is regular on $(0,T)$.
The borderline case of $q=3$ and $s=\infty$ is excluded from\eqref{lad1}. One of the first results in this direction was by von Wahl \cite{vW85}, who showed that the space $C(0, T ;L^3)$ of continuous $L^3$-valued functions is a regularity class. In \cite{Koz97}, it was shown that the space $BV (0, T ;L^3)$ of $L^3$-valued functions of bounded variation is also a regularity class. Further regularity classes within $L^{\infty}(0, T ;L^3)$ are given in \cite{Koz97}, \cite{dV97}. See \cite{Koz96} concerning a uniqueness result in this space.
Kozono \cite{Koz01} was able to improve the last result by replacing $L^3$ by a larger Lorentz space. This motivated mathematicians to look into more general Lorentz spaces \cite{bomi}\cite{koya98}. Expanding on this idea, Sohr \cite{sohr} was able to extend Serrin's result by introducing Lorentz spaces in both time and space. The space $L^{\infty}(0,T; L^3)$ was finally shown to be a regularity class by Escauriaza, Seregin and Sverak \cite{sverak}.

Another regularity class  was proposed by Da Veiga \cite{DaVeiga} : If a weak solution $u$ satisfies $$u\in L^s(0,T; W^{1,q})~\text{ with }~~1<s<\infty, ~~\frac{3}{2}<q<\infty, ~~\frac{3}{q}+\frac{2}{s}=2,$$
then it is regular on $(0,T)$. Here $W^{1,q}$ is the usual Sobolev space.  The  well-known Beale-Kato-Majda \cite{BKM} regularity criterion  asserts that if the vorticity $\omega = \nabla \times u \in L^1([0,T]; L^\infty(\Omega))$, then the solution $u$ is regular.
This immediately implies that  the broderline case $q=\infty, s=1$ of Daveiga's result above. 
Regularity results based on conditions on the pressure have also been established \cite{ber, chaelee, daveiga, kukavica, zhou}. Additionally,  regularity results based on only one component of the velocity for the 3D NSE on the whole space or with periodic boundary conditions have also been formulated \cite{he, kukaz, pokorny, titi, zhou02}.

The regularity criterion that we propose for the 3D NSE in this paper is \emph{fundamentally different} from the ones mentioned above and is based on \emph{finitely many observations}\cite{JT,AOT}. The observations that we consider are either
finitely many Fourier coefficients $\{\hat{u}(\mbf k,t), \mbf k=(k_1,k_2,k_3) \in \Z^3, |\mbf k|\le N\}$ \cite{fp}, or finitely many nodal values $\{u(x_i,t)\}_{i=1}^N$ where $\{x_i\}$ are points on an uniform grid covering the domain $\Omega$, or finitely many \emph{volume elements} \cite{JT}.
It is 
intimately linked to, and motivated by, the notion of determining functionals (modes, nodes and volueme elements) as well as \emph{data assimilation}. 
The concept of data assimilation arises in the context of forecasting using dynamical systems. While dealing with biological or physical systems, one is often hindered by a lack of adequate knowledge of the initial state and/or model parameters describing the system. In order to compensate for this, one may utilize available measurements of the system, collected  on a {\it much coarser (spatial) scale than the desired resolution of the forecast.} An example of this occurs in weather prediction where one has almost continuously collected data from sparsely located weather stations. The objective of data assimilation and signal synchronization is to use this coarse scale observational measurements to fine tune  our knowledge of the state and/or model to improve the accuracy of the forecasts \cite{Daley1991, Kalnay2003}.

Data assimilation algorithms can be broadly classified as probabilistic and deterministic. Many of the classical data assimilation algorithms were probabilistic methods based on linear quadractic estimation, also known as the Kalman filter. For nonlinear models, modified versions of the Kalman filter have been employed, such as the Ensemble Kalman Filter (EnKF), Extended Kalman Filter (EKF) and the Unscented Kalman Filter, (although, unlike the Kalman filter,  these do not enjoy the optimality property). Consequently, there has been a recent surge of interest in developing a rigorous mathematical framework for these approaches; see, for instance, \cite{asch,  HMbook2012,  Kalnay2003, KLS, LSZbook2015, ReichCotterbook2015} and the references therein. These works provide a Bayesian and variational framework for the problem, with emphasis on analyzing variational and Kalman filter based methods.

A predominantly used deterministic approach to data assimilation is the technique of \emph{nudging}, which utilizes a feedback control paradigm via the \emph{Newton relaxation scheme}. This method is often cheaper to employ and is based on the idea of existence of finitely many determining functionals (nodes, modes and volume averages) for a dissipative dynamical system. A rigorous mathematical analysis of this method in the context of fluid dynamics by incorporating observed data into the model via interpolation operators was first performed by Azouani, Olson and Titi \cite{AOT}. Due to this, the nudging algorithm will henceforth be referred to as the \emph{AOT system}.

A schematic description of the nudging algorithm is as follows.
Assuming that the observations are generated from a continuous dynamical system given by
\[
\dt u = F(u), u(0)=u_0,
\]
the associated AOT system is given by
\begin{equation} \label{aotsystem}
\dt w =F(w)-\mu I_h(w-u), w(0)=w_0\ (\mbox{arbitrary}).
\end{equation}
Here, $I_hu$ is an approximation of $u$ constructed solely from observational data. We mainly consider three types of observational data : modes (low fourier modes of $u$), nodes (pointwise measurements of $u$) and volume averages (local averages of $u$). The approximation is constructed through the use \emph{interpolation operators} $I_h$ acting on the phase space (See subsection \ref{interpolation}). $h$ refers to the size of the spatial grid over which the observed data was collected. In case of the \emph{modal interpolant}, the reciprocal of $h$ is proportional to the number of observed modes. Moreover,  $\mu>0$ is the \emph{relaxation/nudging parameter} an appropriate choice of which needs to be made for the algorithm to work, i.e. the system to be {\em globally (in time) well-posed and for its solution to possess the asymptotic  tracking property}, namely, $\|w-u\| \lra 0$ as $t \ra \infty$ in a suitable norm. This in turn critically depends on the richness of the observation space and bounds on the system variables.

One of the first results that \emph{hinted} a possible link between data assimilation and regularity for the 3D NSE was given in \cite{bisprice} where the authors  succeeded in applying the AOT data assimilation technique to the 3D NSE for the modal case, based on certain conditions on the observed modal data. The regularity of the data assimilated solution was crucial in demonstrating the tracking property which was in turn achieved under specific conditions on the observed modal data. There was hence a connection between the specific condition on the observed data and regularity. Now the question was whether we could extend the idea to obtain a condition on the observed data that would be sufficient to establish the regularity of the actual solution to the 3D NSE. We pursued this line of enquiry in \cite{BB} to establish an \emph{observable regularity criterion} for the 3D NSE on the \emph{weak attractor}, and based on observed data that were either modal or volume elements. The criterion was based on a condition on either the observed modes or the observed volume averages. It should be noted that the case of nodal observations was not covered by the technique in \cite{BB}.

In this paper, we \emph{substantially} generalize our result in \cite{BB} and propose a regularity criterion for a Leray-Hopf weak solution of the 3D NSE which not only covers the case of finitely many observed modes or volume elements, but also finitely many pointwise observations (nodes).
The new criterion, unlike the one in \cite{BB}, doesn't require the solution to be on the weak attractor, but is now formulated on a finite interval $[0,T]$. Due to the fact that our regularity criterion is based on finitely many observations, it is completely different from the classical ones described earlier. 

Before stating our main result, we define the quantity $M_{h,u}$, purely in terms of observed data as follows:
\begin{equation}\label{mh1}
       \begin{split}
        M_{h,u}^2=\esssup_{0\leq t< T}\begin{dcases}
        \displaystyle\|P_N(u)\|^2\sim \sum_{|k|\leq N}|\lambda_{k}|^2|\hat{u}(k)|^2,~~\frac{2\pi N}{L}\sim\frac{1}{h}&(\text{Modal})\\[10pt]
         \displaystyle Ch \sum_{\alpha}|\bar{u}_{\alpha}|^2,
          ~\bar{u}_\alpha = \frac{1}{|Q_\alpha|}\int_{Q_\alpha} u &(Volume)\\[10pt]
         \displaystyle Ch \sum_{\alpha}|u(x_\alpha)|^2,
           &(Nodal)
        \end{dcases}
        \end{split}
        \end{equation}
$M_h$ plays a key role in making an informed guess on the upper bound of $\|u
\|_{H^1}$. Our proposed regularity criterion is as follows:

 \begin{theorem}
           Let $u$ be a weak solution to the 3D NSE given by \eqref{3dnav} such that $u(0)\in V$ and $M_h$ be defined by \eqref{mh}.
           If there exists an $h>0$ such that
	    \begin{equation}
	       \max\left\{\nu\lambda_1,\frac{cW_{h}^4}{\nu^3},\frac{cW_h|f|}{\nu^2}\right\}\leq\frac{\nu}{4ch^2}, \quad\text{ where }\quad  W_{h}^2=\frac{c}{\nu^2\lambda_1} |f|^2 + M_{h}^2,
	    \end{equation}
	    then $u$ is regular and $\|u\|\leq W_h$.
        \end{theorem}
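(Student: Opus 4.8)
The plan is to synchronize the given weak solution $u$ with a \emph{regular} trajectory of the AOT nudging system \eqref{aotsystem} fed by the observations $I_h u$, and then to transport that regularity back onto $u$ via a weak--strong uniqueness argument. Write the 3D NSE in functional form as $\dt u + \nu A u + B(u,u) = Pf$, with $A$ the Stokes operator (eigenvalues $\lambda_1\le\lambda_2\le\cdots$), $B$ the bilinear term, $P$ the Leray projection, and let $I_h$ be the interpolant attached to the observation type (modal $P_N$, nodal, or volume average). Fix the nudging parameter $\mu$ comparable to $\nu/h^2$ — concretely, of the order of the common right-hand side $\nu/(4ch^2)$ of the three hypotheses — and let $w$ solve
\[
\dt w + \nu A w + B(w,w) = Pf - \mu I_h(w-u),\qquad w(0)=u(0)\in V.
\]
The argument then has two halves: (I) $w$ is a global strong solution on $[0,T]$ with $\|w(t)\|\le W_h$; and (II) $u\equiv w$ on $[0,T]$. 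Granted both, $u$ inherits the $H^1$-bound $\|u(t)\|\le W_h$ on $[0,T]$ and hence is regular, which is the assertion. (Throughout, $\|\cdot\|$ denotes the $H^1/V$-norm and $|\cdot|$ the $L^2$-norm, as in the statement.)

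Part (I) is the analytic core. I would test the $w$-equation against $Aw$ and control the three resulting terms: the inertial term by the 3D interpolation/Agmon inequality $|\langle B(w,w),Aw\rangle|\le\tfrac{\nu}{8}|Aw|^2+\tfrac{c}{\nu^3}\|w\|^{6}$; the forcing by $|\langle Pf,Aw\rangle|\le\tfrac{\nu}{8}|Aw|^2+\tfrac{c}{\nu}|f|^2$; and — the crucial one — the feedback term, using the approximation property $|(I-I_h)g|\le ch\,\|g\|$ of $I_h$ (together with, for modal data, the spectral gap $\lambda_{N+1}^{-1}\sim h^2$) to rewrite $-\mu\langle I_h(w-u),Aw\rangle$ as a genuine damping $-\tfrac{\mu}{2}\|w\|^2$ up to (i) an error $\lesssim\mu h^2|Aw|^2$ absorbed into $\nu|Aw|^2$ — this is why $\mu\lesssim\nu/h^2$ is imposed — and (ii) a data term $\lesssim\mu M_h^2$. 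Collecting everything and keeping a fraction of the viscous dissipation yields a logistic-type differential inequality, schematically
\[
\dt\|w\|^2\;\le\;2\Big(\tfrac{c}{\nu^3}\|w\|^{4}+\tfrac{c|f|}{\nu^2}\|w\|-\tfrac{\mu}{2}-\nu\lambda_1\Big)\|w\|^2\;+\;c\mu M_h^2\;+\;\tfrac{c}{\nu}|f|^2 .
\]
On the sublevel set $\{\|w\|\le W_h\}$ the destabilizing coefficients are $\le\tfrac{cW_h^4}{\nu^3}$, $\tfrac{cW_h|f|}{\nu^2}$ and $\nu\lambda_1$, and the three hypotheses are exactly the statements that each of these is $\le\nu/(4ch^2)\sim\mu$; hence on $\{\|w\|\le W_h\}$ they are all dominated by the nudging rate $\mu$, the inequality becomes linear and dissipative, and for a suitable choice of the generic constant $c$ (and of the normalization in $M_h$) the quantity $W_h^2=\tfrac{c}{\nu^2\lambda_1}|f|^2+M_h^2$ is a supersolution. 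Since $\{\|w\|\le W_h\}$ is then positively invariant and contains $w(0)=u(0)$ (where the hypothesis $u(0)\in V$ enters), a standard continuation/barrier argument gives $\|w(t)\|\le W_h$ on all of $[0,T]$; in particular $w$ does not blow up.

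Part (II) is a weak--strong uniqueness estimate. Put $v=u-w$; then $v(0)=0$ and $\dt v+\nu Av+B(u,u)-B(w,w)=-\mu I_h v$. Pair with $v$ and integrate by parts: $|\langle B(u,u)-B(w,w),v\rangle|=|\langle (v\cdot\nabla)w,v\rangle|\le c\,|v|^{1/2}\|v\|^{3/2}\|w\|\le\tfrac{\nu}{2}\|v\|^2+\tfrac{c}{\nu^3}\|w\|^{4}|v|^2$, in which only $\|w\|$ — known bounded from Part (I) — appears; the feedback term gives $-\mu\langle I_h v,v\rangle\le-\mu|v|^2+\tfrac{\nu}{2}\|v\|^2+\tfrac{c\mu^2h^2}{\nu}|v|^2$, the last term being $\le\tfrac{\mu}{2}|v|^2$ under $\mu\lesssim\nu/h^2$. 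Since $u$ is a Leray--Hopf weak solution, the standard weak--strong computation (energy inequality for $u$, energy equality for the strong solution $w$) is legitimate; absorbing the $\|v\|^2$-terms leaves $\dt|v|^2\le\tfrac{c}{\nu^3}\|w\|^{4}|v|^2\le\tfrac{cW_h^4}{\nu^3}|v|^2$, a linear inequality with bounded coefficient. Grönwall with $v(0)=0$ forces $v\equiv0$ on $[0,T]$, i.e.\ $u=w$, completing the proof.

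The main obstacle is Part (I), and within it carrying the $H^1$-energy estimate through \emph{uniformly} for the three interpolant types — above all the nodal one, which is precisely where the technique of \cite{BB} stopped. For modal data the feedback is genuinely smoothing and the spectral gap does the work, but for volume and especially nodal interpolants $I_h$ is merely a bounded-in-$h$ $L^2$-correction rather than an orthogonal projection, so extracting the clean damping $-\tfrac{\mu}{2}\|w\|^2$ at the $H^1$ level and controlling the cross term $\langle I_h u,Aw\rangle$ purely in terms of $M_h$ — rather than an uncontrolled higher Sobolev norm of the (a priori merely weak) solution $u$ — requires a sharpened interpolation estimate compatible with testing against $Aw$; this is the new technical input. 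A preliminary point, to be dispatched at the outset, is that for nodal observations the very quantities $M_h$ and $I_h u$ presuppose enough regularity of $u$ for pointwise values (or averages over fine balls) to be defined; once this is arranged within the generalized-interpolant framework, the argument above applies verbatim.
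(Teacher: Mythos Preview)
Your overall strategy—build a regular auxiliary solution $w$ via the AOT nudging equation and transfer its regularity onto $u$—matches the paper's, and the $H^1$-energy inequality you outline in Part (I) is essentially the one the paper derives. However, your choice of initial data $w(0)=u(0)$ (made so that Part (II) yields $u\equiv w$ via weak--strong uniqueness) creates a genuine gap in Part (I): the barrier/invariance argument requires the initial datum to lie inside the barrier, i.e.\ $\|u(0)\|\le W_h$, and this is \emph{not} part of the hypotheses. The assumption $u(0)\in V$ only gives $\|u(0)\|<\infty$; there is no a priori reason $\|u(0)\|$ should be dominated by $W_h$ (for modal data, for instance, $M_h$ controls only $\|P_N u\|$, not the high modes of $u(0)$). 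If $\|u(0)\|>W_h$ the sublevel set $\{\|w\|\le W_h\}$ is already violated at $t=0$, the cubic term $\tfrac{c}{\nu^3}\|w\|^6$ is not dominated by the nudging damping, and nothing prevents $w$ from blowing up. Your parenthetical ``where the hypothesis $u(0)\in V$ enters'' conflates finiteness of $\|u(0)\|$ with the specific bound $\|u(0)\|\le W_h$.

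The paper avoids this by taking $w(0)=0$, so the barrier $\{\|w\|\le W_h\}$ trivially contains the initial datum and Part (I) goes through. Of course then $u(0)-w(0)=u(0)\neq 0$, so your $L^2$ weak--strong uniqueness no longer concludes $u=w$. Instead of weak--strong uniqueness, the paper runs a \emph{second} barrier argument at the $H^1$ level directly on $\tilde w=u-w$, with a barrier $2\|u(0)\|$ adapted to the size of $\tilde w(0)=u(0)$; this is legitimate on the (nonempty, since $u(0)\in V$) maximal interval of strong existence for $u$, and the resulting bound $\|\tilde w(t)\|\le\|u(0)\|e^{-\mu t/2}$ keeps $\|u\|\le\|w\|+\|\tilde w\|$ bounded there, so the strong solution extends. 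The key difference from your Part (II) is that this $H^1$-estimate on the difference does not require $\tilde w(0)=0$—the nudging makes the difference equation genuinely dissipative at the $H^1$ level—whereas your $L^2$ Gr\"onwall conclusion $v\equiv 0$ does. Decoupling the two barriers ($W_h$ for $w$, started from $0$; and $2\|u(0)\|$ for $\tilde w$, started from $u(0)$) is precisely what lets the argument close without any smallness assumption on $\|u(0)\|$.
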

Here $V$ is the divergence free subspace of $H^1(\Omega)$ containing functions that obey the boundary conditions prescribed in \eqref{1}. To the best of our knowledge, this is the first regularity criterion for the 3D NSE that \emph{requires knowing the solution on only finitely many points}. Additionally, we were able to apply the AOT algorithm to the 3D NSE for nodal data. In fact, our regularity criterion follows as a natural consequence of the results we obtained for the data assimilation algorithm with nodal data. Furthermore, we were able to prove that the criterion we obtained is \emph{both a necessary and sufficient condition for regularity}.

The paper is structured as follows: \textbf{Section 2} introduces the notation, the interpolation operators and sets up the data assimilated system for the 3D NSE. \textbf{Section 3} addresses the well-posedness of the data assimilated 3D NSE and concludes with the proof of the tracking property (synchronization). \textbf{Section 4} states and discusses the regularity criterion. \textbf{Section 5} discusses the conditions under which a set of nodes are determining. Finally, the \textbf{Appendix} contains proofs of inequalities that have been crucially used to prove results throughout the paper.

    \section{Preliminaries}
    \subsection{Notation}	
    The 3-D incompressible Navier-Stokes equations (3D NSE) on a domain $\Omega=[0,L]^3\subset\mathbb{R}^3$ with time independent forcing (assumed for simplicity) is given by
\begin{equation}\label{3dnav1}
\begin{split}
    \frac{\partial u}{\partial t}+(u\cdot\nabla)u-\nu\Delta u+\nabla p&=f\\
    \nabla\cdot u&=0\\
    u(t=0)&=u_0.
\end{split}
\end{equation}
    Here $u$ denotes the velocity of the fluid, $p$ denotes the pressure, $\nu$ is the kinematic viscosity and $f$ is the body force.
    
    For simplicity, we assume periodic boundary conditions:
    
    $u$ is space periodic with period $L$ in all variables with space average zero, i.e., $\displaystyle\int_\Omega u=0.$
    
	Following \cite{cf, Temam}, we briefly introduce the notations. For $\alpha>0$, $H^{\alpha}(\Omega)$ is the usual $L^2$ based Sobolev space. We denote the inner product and norm of $L^2(\Omega)$ by $(\cdot,\cdot)$ and $|\cdot|$ respectively and the inner product and norm of $H^1(\Omega)$ by $((\cdot,\cdot))$ and $||\cdot||$ respectively.\\
	We define $\mathcal{V}$ as the space of $L$-periodic trigonometric polynomials from $\mathbb{R}^3$ to $\mathbb{R}^3$ that are divergence free and have zero average. $H$ denotes the closure of $\mathcal{V}$ in $\left(L^2(\Omega)\right)^3$ while $V$ denotes the closure of $\mathcal{V}$ in $\left(H^1(\Omega)\right)^3$.
	
	$H$ is endowed with the inner products
	$$(u,v)=\sum_{i=1}^3\int_{\Omega}u_i(x)v_i(x)dx$$ and the norm $|u|=(u,u)^{1/2}$. $V$ is endowed with the inner product
	$$((u,v))=\sum_{i,j=1}^3\int_{\Omega}\partial_ju_i(x)\partial_jv_i(x)dx$$
	and the associated norm $\|u\|=((u,u))^{1/2}$. We also denote by $P_{\sigma}$ the Leray-Hopf orthogonal projection operator from $L^2(\Omega)$ to $H$.
	
	\subsection{Interpolant Operators}\label{interpolation}
	As part of our data assimilation algorithm, we will be considering interpolation operators based on the spatial observations of the reference solution to the system \eqref{3dnav1}. We briefly introduce some important interpolation operators below. They can be broadly classified into two categories: Type-\rom{1} and Type-\rom{2}.\\
	A finite rank, bounded linear operator $I_h : \left(H^1(\Omega)\right)^3\to \left(L^2(\Omega)\right)^3$ is said to be a type-\rom{1} interpolant operator if there exists a dimensionless constant $c>0$
	such that
	\begin{equation}\label{intest}
	|I_h(v)|\leq c|v| \text{ and } |I_h(v)-v| \leq ch\|v\|~~ \forall v\in \left(H^1(\Omega)\right)^3.
	\end{equation}
	Two important examples of type-\rom{1} interpolation functions are:
	\begin{itemize}
	    \item \textbf{Modal interpolation}: In this case $I_h u=P_K(u)$ with $h\sim 1/\lambda_K^{1/2}$, where $P_K$ denotes the orthogonal projection onto the space spanned by the first $K$ eigenvectors of the Stokes operator $A$.  Indeed, one can easily check that it satisfies \eqref{intest}:
	\begin{equation}\label{modalest}
	|P_K(v)|\leq |v|~~\forall v\in L^2(\Omega) ~~\text{ and } ~~|P_K(v)-v| \lesssim\frac{1}{\lambda_K^{1/2}}\|v\|~~\forall v\in \left(H^1(\Omega)\right)^3.
	\end{equation}

	\item \textbf{Volume interpolation}: In this case, $\Omega$ is partitioned into $N$ smaller cuboids $Q_{\alpha}$, where $\alpha\in\mathcal{J}=\left\{(j, k, l)\in \mathbb{N}\times \mathbb{N}\times \mathbb{N}:1\leq j, k, l\leq \sqrt[3]{N}\right\}$. Each cuboid is of diameter  $h=\sqrt{3}L/\sqrt[3]{N}$. The interpolation operator is defined as follows:
	\begin{equation}\label{volint}
	\left(I_h(v)\right)_i = \sum_{\alpha\in\mathcal{J}}\bar{v}_{i,\alpha}\chi_{Q_{\alpha}}
	\end{equation}
	where 
	\begin{equation}\label{valpha}
	\bar{v}_{i,\alpha} = \frac{1}{|Q_\alpha|}\int_{Q_{\alpha}}v_i(x)dx,
	\end{equation}
	and $|Q_\alpha|$ is the volume of $Q_\alpha$. For $v\in V_0$, we define $I_hv=\left(I_h(v_1),I_h(v_2),I_h(v_3)\right),$ where $v=(v_1,v_2,v_3)$.
	\end{itemize}
	A linear operator $I_h : \left(H^2(\Omega)\right)^3\to \left(L^2(\Omega)\right)^3$ is said to be a type-\rom{2} interpolation operator if there exists dimensionless constants $c_1,$ $c_2$ such that
	such that
	\begin{equation}\label{intest2}
	|I_h(v)-v| \leq c_1h\|v\|+ c_2h^2|\Delta v|~~ \forall v\in \left(H^2(\Omega)\right)^3.
	\end{equation}
	\begin{itemize}
	    \item \textbf{Nodal Interpolation}: In this case the domain is split into subdomains similar to the volume interpolation case above and from each subdomain $Q_\alpha$, a representative point $x_\alpha$ is chosen. This represents taking measurements at a finite number of points in the domain. The function is given by
	    \begin{equation}\label{nodint}
	        Iv(\cdot)=I_h v(\cdot)=\sum_{j=1}^Nv(x_\alpha)\chi_{Q_\alpha}(\cdot),
	    \end{equation}
	    where $h=\sqrt{3}L/\sqrt[3]{N}$.
	\end{itemize}

	\subsection{Mathematical Model}
	\noindent
	In order to prescribe \eqref{3dnav1} as a precise PDE model, we introduce a few operators.
	Let $D(A)=V\cap \left(H^2(\Omega)\right)^3$ and $A : D(A)\to H$ be the unbounded linear operator defined by
	\begin{equation}\label{A}
	(Au,v)=((u,v)).
	\end{equation}
	We recall that $A$ is a positive self adjoint operator with a compact inverse. Moreover, there exists a complete orthonormal set of
	eigenfunctions $\phi_{j}\in H $, such that $A\phi_{j} = \lambda_{j}\phi_{j}$, where $0<\lambda_{1} \leq \lambda_{2} \leq \lambda_{3} \leq \dots $ are the
	eigenvalues of $A$ repeated according to multiplicity.
	
	\noindent
	We denote by $H_n$ the space spanned by the first $n$ eigenvectors of $A$ and the
	orthogonal projection from $H$ onto $H_n$ is denoted by $P_n$. We also have the
	Poincare inequality
	\begin{equation}\label{poincare}
	\lambda_1^{1/2}|v|\leq\|v\|, v\in V.
	\end{equation}
	
	\noindent
	Let $V'$ be the dual of $V$. We define the bilinear term $B:V\times V\to V'$ by
	\[
	\langle B(u,v),w\rangle_{V',V}=(((u\cdot\nabla) v),w)
	\]
		
	\noindent
	The bilinear term $B$ satisfies the orthogonality property
	\begin{equation}\label{orthoganal}
	B(u,w,w) = 0~ \forall~ u\in V,~w\in V.
	\end{equation}
	We recall some well-known bounds on the bilinear term for velocity in the 3D case.
	\begin{proposition}
		If $u,v\in V$ and $w\in H$, then
		\begin{equation}\label{nolinest1}
		|(B(u,v),w)|\leq c\|u\|_{L^6}\|\nabla v\|_{L^3}\|w\|_{L^2}\leq c\|u\|\|v\|^{1/2}|Av|^{1/2}|w|
		\end{equation}
		Moreover if $u,v,w\in V$, then
		\begin{equation}\label{nolinest2}
		|(B(u,v),w)|\leq c\|u\|_{L^4}\|\nabla v\|_{L^2}\|w\|_{L^4}\leq c|u|^{1/4}\|u\|^{3/4}\|v\||w|^{1/4}\|w\|^{3/4}
		\end{equation}
	\end{proposition}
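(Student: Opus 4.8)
The plan is to prove both chains of inequalities by first unwinding the definition of the trilinear form, then applying H\"older's inequality to obtain the first inequality in each chain, and finally invoking the relevant Sobolev and interpolation (Gagliardo--Nirenberg / Ladyzhenskaya) inequalities together with the identification of $\|D^2 v\|$ with $|Av|$. Recall from the definition of $B$ that, for sufficiently smooth divergence-free fields,
\[
(B(u,v),w)=\sum_{i,j=1}^3\int_\Omega u_j\,(\partial_j v_i)\,w_i\di{x},
\]
so every bound reduces to estimating this single integral; a density argument (approximating $w$ by elements of $\mathcal V$) then extends the estimates to the stated function-space settings, $w\in H$ in the first case and $w\in V$ in the second.

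For \eqref{nolinest1}, I would apply H\"older's inequality to the integrand with the exponent triple $(6,3,2)$, admissible since $\tfrac16+\tfrac13+\tfrac12=1$, giving directly $|(B(u,v),w)|\le c\|u\|_{L^6}\|\nabla v\|_{L^3}|w|$. To pass to the second inequality I would use the three-dimensional Sobolev embedding $H^1\hookrightarrow L^6$ in the form $\|u\|_{L^6}\le c\|u\|$, valid on the zero-average periodic space by the Poincar\'e inequality \eqref{poincare}, and the Gagliardo--Nirenberg inequality $\|g\|_{L^3}\le c|g|^{1/2}\|\nabla g\|_{L^2}^{1/2}$ applied to $g=\nabla v$, which yields $\|\nabla v\|_{L^3}\le c\|v\|^{1/2}\|D^2 v\|^{1/2}$. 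On the periodic box, Parseval's identity for divergence-free, zero-average fields gives $\|D^2 v\|=|\Delta v|=|Av|$, so $\|\nabla v\|_{L^3}\le c\|v\|^{1/2}|Av|^{1/2}$, completing the chain. For \eqref{nolinest2}, the same strategy applies with the triple $(4,2,4)$, admissible since $\tfrac14+\tfrac12+\tfrac14=1$, producing $|(B(u,v),w)|\le c\|u\|_{L^4}\|\nabla v\|_{L^2}\|w\|_{L^4}$; the second inequality then follows from the three-dimensional Ladyzhenskaya inequality $\|g\|_{L^4}\le c|g|^{1/4}\|\nabla g\|_{L^2}^{3/4}$ applied to $g=u$ and $g=w$, together with $\|\nabla v\|_{L^2}=\|v\|$.

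The routine parts here are the H\"older splittings, which are immediate once the exponent triples are checked to sum to one. The only points requiring genuine care are the justification of the interpolation inequalities and the identity $\|D^2v\|=|Av|$ in the periodic, divergence-free, zero-average setting, and ensuring the constants are dimensionless and independent of $u,v,w$; one must also verify that the density argument is legitimate so that the integral representation remains valid for $w$ merely in $H$ (respectively $V$), where the individual factors are controlled precisely by the norms appearing on the right-hand sides. I expect this embedding and interpolation bookkeeping, rather than any substantive difficulty, to be the most delicate step.
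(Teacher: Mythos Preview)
Your proposal is correct and follows exactly the standard textbook route: H\"older with the triples $(6,3,2)$ and $(4,2,4)$, then the 3D Sobolev embedding $H^1\hookrightarrow L^6$, the Gagliardo--Nirenberg interpolation $\|\nabla v\|_{L^3}\le c\|v\|^{1/2}|Av|^{1/2}$, and the 3D Ladyzhenskaya inequality \eqref{lady}. The paper itself does not supply a proof; the proposition is simply stated as a recall of well-known bounds (see, e.g., \cite{cf,Temam}), so there is no alternative argument to compare against. Your remarks on density and on the identity $\|D^2v\|=|Av|$ in the periodic divergence-free zero-average setting are the right points of care, and one should note in passing that the second inequality in \eqref{nolinest1} tacitly requires $v\in D(A)$ for $|Av|$ to be finite, which is how the estimate is used throughout the paper.
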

	\noindent
We also recall the Ladyzhenskaya's inequality for three dimensions :
\begin{equation}\label{lady}
\|w\|_{L^4}\leq C|w|_0^{\frac{1}{4}}\|w\|_0^{\frac{3}{4}}
\end{equation}
\noindent
We denote by $P_{\sigma}$ the Leray-Hopf orthogonal projection operator from $L^2(\Omega)$ to $H$.
With the above notation, by applying $P_{\sigma}$ to \eqref{3dnav1}, we may express the 3-D Navier-Stokes equation in the following functional form:
\begin{equation}\label{3dnav}
\begin{split}
    \frac{\partial u}{\partial t}+B(u,u)+\nu A(u)&=f.\\
    \nabla\cdot u&=0\\
    u(t=0)&=u_0.
    \end{split}
\end{equation}
where, by abuse of notation, we denote $P_{\sigma}(f)$ by $f$.
The data assimilation algorithm is given by the solution $w$ of the equation
\begin{equation}\label{heq}
\begin{split}
     \frac{\partial w}{\partial t}+B(w,w)+\nu Aw&=f + \mu(\tilde{I}u-\tilde{I}w)\\
     \nabla\cdot w&=0\\
     w(t=0)&=0.
    \end{split}
\end{equation}
In the above equation, $w= w_h$ depends on $h$. For convenience of notation, we suppress the $h$ dependence of $w$. $\tilde{I}$ is a smoothed nodal interpolation operator as defined in \eqref{modef}.
The Galerkin approximation of \eqref{heq} is obtained by applying $P_n$ and is given by
\begin{equation}\label{heq1}
\begin{split}
     \frac{\partial w_n}{\partial t}+B(w_n,w_n)+\nu Aw_n&=P_n f + \mu P_n(\tilde{I}u-\tilde{I}w_n)\\
      \nabla\cdot w_n&=0\\
      w_n(t=0)&=0.
    \end{split}
\end{equation}

    \subsection{Well-Posedness}
    	\begin{definition}\label{definesol}
	    $u$ is said to be a weak solution to \eqref{3dnav1} if for all $T>0$,
	    \vspace{3mm}
	    \begin{itemize}
	        \item $\displaystyle u\in L^{\infty}(0,T;H)\cap L^{2}(0,T;V)$
	        \vspace{3mm}
	        \item $u$ satisfies, $\displaystyle\forall v\in V$,  a.e.t 
	    \begin{equation}
	    \begin{split}
	         &\frac{d}{dt}(u,v)+\nu((u,v))+(B(u,u),v)=(f,v)
	    \end{split}
	    \end{equation}
	    \end{itemize}
	    A Leray-Hopf weak solution additionally satisfies, a.e. $s$, and for all  $t \ge s$,  the energy inequality
	    \begin{align}
	         |u(t)|^2+\int_s^t\nu\|u(\sigma)\|^2d\sigma&\leq |u(s)|^2+\int_s^t(f(\sigma),u(\sigma))d\sigma.\label{lerener}
	    \end{align}
	    A weak solution is said to be a strong/regular solution if additionally $$u\in L^{\infty}(0,T; V)\cap L^2(0,T;D(A)).$$
	\end{definition}
	
    \begin{remark}
	From the above definition, we see that a weak solution to the 3D NSE, $u\in L^{\infty}(0,T;H)\cap L^{2}(0,T;V)$. Hence there exists a $M(u_0)\in \mathbb{R}$ such that
	\begin{equation}\label{unibound}
	|u|\leq M.
	\end{equation}
\end{remark}

    \section{Data Assimilation With Type-\rom{2} Interpolation Function}
    
    In this section we will be discussing the well-posedness and the regularity of our data assimilated system. Our end goal is to demonstrate that the data assimilated velocity \emph{asymptotically} approaches the actual velocity (synchronization).

    \subsection{Existence And Uniqueness Of Strong Solution}
    In order to discuss existence and uniqueness of strong solutions, we will have to impose conditions on our data and look at the term $\|Iu\|$.
For the case of modal interpolation, in addition to satisfying \eqref{intest}, $I_h$ also satisfies
\begin{equation}\label{mod}
\|I_hv\| \leq \|P_Nu\|\leq c\|v\|~~ \forall v\in H^1(\Omega),~N\sim\frac{1}{h}
\end{equation}
The piece-wise constant nodal interpolation operator (as in \eqref{nodint}) does not have enough regularity for us to bound the term $\|\tilde{I}u\|$. In order to establish an inequality similar to \eqref{mod} and also an inequality explicitly in terms of the data for nodal interpolation, we define a smoothed nodal interpolation operator $\tilde{I}$ as follows
\begin{equation}\label{modef}
\tilde{I}(v)(x) = \sum_{\alpha\in\mathcal{J}}v(x_\alpha)\phi_{\alpha}(x),~~ v\in H^1(\Omega) 
\end{equation}
where
\begin{equation}\label{phi}
\phi_{\alpha}=\rho_{\epsilon}*\psi_{Q_{\alpha}},~\rho_{\epsilon}(x)=\epsilon^{-3}\rho\left(x/{\epsilon}\right).
\end{equation}

\begin{equation}
    \psi_{Q_{\alpha}}(x) =\begin{cases}
\displaystyle \chi_{Q_{\alpha}} &,\text{ for }\alpha\not\in\mathcal{E}\\
\chi_{Q_{\epsilon,\alpha}} &,\text{ for }\alpha\in\mathcal{E}
\end{cases}
\end{equation}

\begin{equation}\label{rho}
\rho(x) =\begin{cases}
\displaystyle K_{0}\exp\left(\frac{-1}{1-x^2}\right) &,\text{ for }|x|<1\\
~0 &,\text{ otherwise}
\end{cases}
\end{equation}
and 
\begin{equation*}
(K_0)^{-1}=\int_{|x|<1}\exp\left(\frac{-1}{1-x^2}\right)dx.
\end{equation*}
We set $\epsilon=h/10.$
Repeating the calculations in the appendix of \cite{AOT}, we obtain that $\tilde{I}$ is a type-\rom{2} interpolation operator. Additionally, repeating the arguments in the appendix of \cite{BB} and plugging in $u(x_\alpha)$ for $\bar{v}_\alpha$ in Theorem 6.2, we may conclude that
\begin{equation}\label{modib}
    \|\tilde{I}u\|^2\leq Ch\sum_{\alpha\in\mathcal{J}}|u(x_\alpha)|^2.
\end{equation}
    Next, for a general Leray-Hopf weak solution $u$ of \eqref{3dnav}, we define the quantity $M_{h}$($=M_{h,u,T}$) as
    \begin{equation}\label{mh}
       \begin{split}
        M_{h,u}^2=\sup_{0\leq t\leq T}\begin{dcases}
        \displaystyle\|P_N(u)\|^2\sim \sum_{|k|\leq N}|\lambda_{k}|^2|\hat{u}(k)|^2,~N\sim\frac{1}{h}&(\text{Modal})\\[10pt]
         \displaystyle Ch \sum_{\alpha}|\bar{u}_{\alpha}|^2,
          ~\bar{u}_\alpha = \frac{1}{|Q_\alpha|}\int_{Q_\alpha} u &(Volume)\\[10pt]
         \displaystyle Ch \sum_{\alpha}|u(x_\alpha)|^2,
           &(Nodal)
        \end{dcases}
        \end{split}
        \end{equation}
    where, $\lambda_{k}$ is the $k^{th}$ smallest eigenvalue of the stokes operator $A$ corresponding to the eigenvector $\phi_{k}$ and $\displaystyle P_N(u)=\sum_{k=1}^N\hat{u}(k)\phi_{k}$.\\
    Combining \eqref{modib} and \eqref{mh}, we obtain the bound
    \begin{equation}
       \sup_{0\leq t\leq T} \|\tilde{I}u\|^2\leq cM_h^2.
    \end{equation}

\begin{theorem}\label{regularity}
           Let $u$ be as in \eqref{3dnav}, $w$ be as in \eqref{heq}, $\tilde{I}$ be the nodal interpolation function as in \eqref{modef} and $I_h$ be the volume interpolation function as in \eqref{volint}.
           Assume that there exists $h>0$ such that
	    \begin{equation}\label{condcond}
	        \max\left\{\nu\lambda_1,\frac{cW_h^4}{\nu^3}\right\}\leq\frac{\nu}{4ch^2},\quad \text{where}\quad W_h^2=\frac{c}{\nu^2\lambda_1} |f|^2 + M_h^2.
	    \end{equation}
	    Let $\mu$ be chosen such that
	    \begin{equation}\label{munodalcond1}
	         \max\left\{\nu\lambda_1,\frac{cW_h^4}{\nu^3}\right\}\leq\mu\leq\frac{\nu}{4ch^2}.
	    \end{equation}
	    Then $w$ is regular and $\|w\|\leq W_h.$ 
        \end{theorem}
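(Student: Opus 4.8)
The plan is the standard route for nudging systems (as in \cite{AOT,BB}): work with the Galerkin truncation \eqref{heq1}, establish a \emph{uniform-in-$n$} a priori bound $\|w_n(t)\|\le W_h$ on every interval $[0,T]$ by an energy estimate in $V$ closed via a continuity/barrier argument, and then pass to the limit $n\to\infty$. The only real difference from the modal/volume (type-\rom{1}) situation is that the smoothed nodal operator $\tilde I$ is type-\rom{2}, obeying \eqref{intest2} rather than \eqref{intest}, while \eqref{modib}--\eqref{mh} supply $\|\tilde I u(t)\|^2\le cM_h^2$ for a.e.\ $t$; I would write the argument for $\tilde I$, the volume interpolant $I_h$ of \eqref{volint} being strictly easier.

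For the a priori estimate I would pair \eqref{heq1} with $Aw_n$; since $w_n\in H_n$ the projectors drop, giving
\[
\tfrac12\tfrac{d}{dt}\|w_n\|^2+\nu|Aw_n|^2=-\big(B(w_n,w_n),Aw_n\big)+(f,Aw_n)+\mu\big(\tilde I u-\tilde I w_n,Aw_n\big).
\]
The nonlinear term is handled by \eqref{nolinest1} and Young: $|(B(w_n,w_n),Aw_n)|\le c\|w_n\|^{3/2}|Aw_n|^{3/2}\le\frac{\nu}{8}|Aw_n|^2+\frac{c}{\nu^3}\|w_n\|^6$, and the forcing by $|(f,Aw_n)|\le\frac{\nu}{8}|Aw_n|^2+\frac{c}{\nu}|f|^2$. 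The feedback term is the heart of the matter: writing $\tilde I w_n=w_n-(w_n-\tilde I w_n)$, using $(w_n,Aw_n)=\|w_n\|^2$ and $(\tilde I u,Aw_n)=((P_\sigma\tilde I u,w_n))$, it equals $-\mu\|w_n\|^2+\mu(w_n-\tilde I w_n,Aw_n)+\mu((P_\sigma\tilde I u,w_n))$. I would then invoke \eqref{intest2}, $|w_n-\tilde I w_n|\le c_1h\|w_n\|+c_2h^2|Aw_n|$: the term $c_2\mu h^2|Aw_n|^2$ is absorbed into the viscous dissipation because $\mu\le\nu/4ch^2$; the cross term $c_1\mu h\|w_n\||Aw_n|$ is Young'd, and since $\mu^2h^2\le\frac{\nu}{4c}\mu$ its $\|w_n\|^2$-coefficient is an arbitrarily small multiple of $\mu$ once the absolute constant in $\mu\le\nu/4ch^2$ is fixed large enough; the observation term is $\le c\mu\|\tilde I u\|\,\|w_n\|\le c\mu M_h\|w_n\|\le \frac{\eta}{2}\mu\|w_n\|^2+c\mu M_h^2$ by \eqref{modib}--\eqref{mh}. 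Collecting terms yields, on all of $[0,T]$,
\[
\tfrac{d}{dt}\|w_n\|^2+\tfrac{\nu}{2}|Aw_n|^2+\eta\mu\|w_n\|^2\le \tfrac{c}{\nu^3}\|w_n\|^6+c\mu M_h^2+\tfrac{c}{\nu}|f|^2
\]
for absolute constants $c$ and some fixed $\eta\in(0,1)$.

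Since $w_n(0)=0$, I would set $T^\ast=\sup\{t:\|w_n(s)\|\le W_h\ \forall\, s\le t\}>0$. On $[0,T^\ast]$ one has $\|w_n\|^4\le W_h^4$, so $\frac{c}{\nu^3}\|w_n\|^6\le\frac{cW_h^4}{\nu^3}\|w_n\|^2$; by \eqref{munodalcond1} (with the constant in \eqref{condcond}--\eqref{munodalcond1} taken large enough to dominate the bilinear constant) this is $\le\frac{\eta}{2}\mu\|w_n\|^2$, and using $W_h^2=\frac{c}{\nu^2\lambda_1}|f|^2+M_h^2$ together with $\mu\ge\nu\lambda_1$ one gets $c\mu M_h^2+\frac{c}{\nu}|f|^2\le\frac{\eta}{2}\mu W_h^2$; hence on $[0,T^\ast]$,
\[
\tfrac{d}{dt}\|w_n\|^2+\tfrac{\nu}{2}|Aw_n|^2+\tfrac{\eta}{2}\mu\|w_n\|^2\le\tfrac{\eta}{2}\mu W_h^2 .
\]
Gr\"onwall's inequality with $\|w_n(0)\|=0$ forces $\|w_n(t)\|^2<W_h^2$ on $[0,T^\ast]$, so $T^\ast$ cannot be finite; thus $\|w_n(t)\|\le W_h$ for all $t\ge 0$ uniformly in $n$, and integrating gives $\int_0^T|Aw_n|^2\le C(T)$ uniformly in $n$. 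Standard compactness --- Banach--Alaoglu in $L^\infty(0,T;V)\cap L^2(0,T;D(A))$, a bound on $\partial_t w_n$ in $L^2(0,T;H)$ from the equation, and Aubin--Lions for strong convergence in $L^2(0,T;V)$ --- lets one pass to the limit in \eqref{heq1}; the limit $w$ solves \eqref{heq}, lies in $L^\infty(0,T;V)\cap L^2(0,T;D(A))$, and satisfies $\|w(t)\|\le W_h$, i.e.\ $w$ is regular, with uniqueness following from the usual energy estimate on the difference of two strong solutions. The step I expect to be the main obstacle is the control of the supercritical 3D term $\frac{c}{\nu^3}\|w_n\|^6$: in the bare NSE this is exactly what blocks global regularity, and here it is tamed only because the feedback supplies the linear damping $-\mu\|w_n\|^2$ with $\mu$ large enough ($\ge cW_h^4/\nu^3$) to beat it below the barrier $W_h$, while staying small enough ($\le\nu/4ch^2$) that the interpolation-error term $\mu h^2|Aw_n|^2$ remains absorbable by viscosity; these two demands are compatible precisely when $h$ is small enough that $cW_h^4/\nu^3\le\nu/4ch^2$, which is the content of \eqref{condcond}, and keeping this balance while verifying that every source term closes against $W_h^2$ is the delicate bookkeeping in the proof.
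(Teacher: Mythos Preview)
Your proposal is correct and follows the same overall architecture as the paper: Galerkin truncation, a $V$--energy estimate obtained by pairing with $Aw_n$, a continuity/barrier argument starting from $w_n(0)=0$ to propagate $\|w_n\|\le W_h$, Gr\"onwall, and then compactness/uniqueness. The estimates for the nonlinear, forcing, and observation terms, and the way \eqref{munodalcond1} is used to balance damping against the cubic $\|w_n\|^6$ source, match the paper's proof essentially line for line.

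The one genuine difference is in how you control the feedback error $\mu(w_n-\tilde I w_n,Aw_n)$. You invoke the type-\rom{2} bound \eqref{intest2} directly, producing a $\mu h^2|Aw_n|^2$ term absorbed by viscosity and a cross term $\mu h\|w_n\||Aw_n|$ handled by Young and $\mu h^2\le \nu/(4c)$. The paper instead inserts the \emph{volume} interpolant $I_h$ as an intermediary, writing $w_n-\tilde I w_n=(w_n-I_hw_n)+(I_hw_n-\tilde I w_n)$; the first piece is controlled by the type-\rom{1} estimate \eqref{intest}, and the second by the appendix inequality \eqref{nodh} (Proposition~\ref{diffi}), $|\tilde I w_n-I_hw_n|^2\le Ch^3\|w_n\||Aw_n|$, which after Young yields a $\frac{\mu^4 h^6}{\nu^3}\|w_n\|^2$ contribution, again absorbable via $\mu h^2\le \nu/(4c)$. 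Both routes close under the same hypothesis \eqref{munodalcond1}. Your approach is shorter and avoids the appendix lemma; the paper's route explains why $I_h$ appears in the statement and reuses the same $I_h$--splitting verbatim in the synchronization Theorem~\ref{Sync}, so their detour buys a uniform template for both results.
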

        \begin{proof}
        We begin by establishing a priori estimates on the Galerkin system.
        Taking the inner product of \eqref{heq1} with $Aw_n$, we obtain
	    \begin{equation}\label{navest}
	    \begin{split}
	    \frac{1}{2}\frac{d}{dt}\|w_n\|^2 +\nu|Aw_n|^2=&-(B(w_n,w_n),Aw_n)+\mu(w_n-\tilde{I}w_n,Aw_n)\\
	    &-\mu\|w_n\|^2+\mu(\tilde{I}u,Aw_n)+(f,Aw_n)
	    \end{split}
	    \end{equation}
	    We bound each term below.
	    First, applying \eqref{nolinest1} and Young’s inequality, we have
	    \begin{align*}
	    |(B(w_n,w_n),Aw_n)|&\leq c\|w_n\|^{3/2}|Aw_n|^{3/2}\leq \frac{c}{\nu^3}\|w_n\|^6+\frac{\nu}{4}|Aw_n|^2.
    	\end{align*}
	    Next, from \eqref{intest}, \eqref{nodh}, Cauchy-Schwartz, Young’s inequality and the second inequality in \eqref{munodalcond1}, we have
    	\begin{align*}
	    |\mu(w_n-\tilde{I}w_n,Aw_n)|&\leq \mu|w_n-I_hw_n|\dot|Aw_n|+\mu|\tilde{I}w_n-I_hw_n| |Aw_n|\\
	    &\leq\frac{\mu^2 ch^2}{\nu}\|w_n\|^2+\frac{\mu^4ch^6}{\nu^3}\|w_n\|^2+\frac{\nu}{4}|Aw|^2\\
	    &\leq \frac{\mu}{4}\|w_n\|^2+\frac{\nu}{4}|Aw_n|^2
    	\end{align*}
    	Applying \eqref{A} and Cauchy-Schwarz, we obtain
    	\begin{align*}
    	\mu(\tilde{I}u,Aw_n)_0&\leq \mu\|\tilde{I}\|^2+\frac{\mu}{4}\|w_n\|^2.
    	\end{align*}
    	Lastly, applying Cauchy-Schwartz and Young’s inequality, we obtain
    	\begin{equation*}
    	    |(f,Aw_n)|\leq |f||Aw_n|\leq\frac{1}{\nu}|f|^2+\frac{\nu}{4}|Aw_n|^2
    	\end{equation*}
    	Inserting the above estimate into \eqref{navest}, we obtain
    	\begin{equation}\label{contra}
    	    \frac{d}{dt}\|w_n\|^2+\left(\mu-\frac{c}{\nu^3}\|w_n\|^4\right)\|w_n\|^2+\frac{\nu}{2}|Aw_n|^2\leq \frac{2}{\nu}|f|^2+2\mu\|\tilde{I}u\|^2
    	\end{equation}
    	Let $[0,T_1]$ be the maximal interval on which $\|w_n(t)\|\le W_h$ holds for $t\in[0,T_1]$ where $W_h$ as in \eqref{condcond}.
    Note that $T_1>0$ exists because we have $w(0)=0$. Assume  that $T_1 < T$. Then by continuity, we must have $\|w_n(T_1)\|=W_h$.
    Using 
    \eqref{munodalcond1}, for all $t \in [0,T_1]$, we obtain
    \begin{equation}\label{ned}
        \frac{d}{dt}\|w_n\|^2 + \frac{\mu}{2}\|w\|^2 +\frac{\nu}{2}|Aw_n|^2\le \frac{2}{\nu}|f|^2 +2\mu \|\tilde{I}u\|^2.
    \end{equation}
     Since $w_n(0)=0$, dropping the last term on the LHS and applying the Gronwall inequality we immediately obtain
    \begin{equation}\label{ex1}
    \|w_n\|^2 \le \frac{4}{\nu^2\lambda_1} |f|^2 + 4\sup_{s\in[0,T)} \|\tilde{I}u\|^2\le\frac{1}{2}W_h\ \forall t \in [0,T_1].
    \end{equation}
    This contradicts the fact that $\|w_n(T_1)\|=W_h$. Therefore we conclude that $T_1\ge T$
    and consequently, $\|w_n(t)\| \le W_h$ for all $t \in [0,T]$.\\
    Also, since $\|w_n(t)\| \le W_h$ for all $t \in [0,T]$, from \eqref{ned} we obtain that for $s\in[0,T-1]$
    \begin{equation}\label{awbound}
         \nu\int_s^{s+1}|Aw_n(t)|^2ds\leq \frac{4}{\nu}|f|^2 +\frac{1}{2}\mu M_h^2.
    \end{equation}
    The remainder of the proof of existence is similar to the proof of existence of strong solutions of the 2-D NSE.
    
    \vspace{3mm}
    
    We now address uniqueness.  Let $w_1$ and $w_2$ be two regular solutions of \eqref{heq} with $\mu$ satisfying \eqref{condcond}. Let $\tilde{w}=w_1-w_2.$ Since $w_1$ and $w_2$ are regular, so is $\tilde{w}$. $\tilde{w}$ satisfies
        \begin{equation}\label{uniq2}
	    \begin{split}
	\frac{d\tilde{w}}{dt}+\nu A\tilde{w}_n+B(\tilde{w},w_{1})+B(w_{2},\tilde{w})&= -\mu I_h\tilde{w}=\mu (\tilde{w}-I_h\tilde{w})-\mu\tilde{w}.
	\end{split}
	\end{equation}
	Taking the inner product of \eqref{uniq2} with $\tilde{w}$, we obtain
	\begin{equation}\label{mmmbop}
	\begin{split}
	\frac{1}{2}\frac{d}{dt}|\tilde{w}|^2+\nu\|\tilde{w}\|^2+\mu|\tilde{w}|^2&\leq |(B(\tilde{w},w_{1}),\tilde{w}|)|+|\mu(\tilde{w}-I_h(\tilde{w}))||\tilde{w}|.
	\end{split}
	\end{equation}
	We bound each term on the RHS.\\
	First, applying \eqref{nolinest2}, Cauchy-Schwartz, Young’s inequality and \eqref{nedeq}, we obtain
	\begin{equation*}
	\begin{split}
	|(B(\tilde{w},w_{1}),\tilde{w}|)|&\leq c|\tilde{w}|^{1/2}\|\tilde{w}\|^{3/2}\|w_{1}\|\leq \frac{c}{\nu^3}M_{h,i}^4|\tilde{w}|^2+\frac{\nu}{2}\|\tilde{w}_n\|^2.
	\end{split}
	\end{equation*}
	Using Cauchy-Schwartz, Young’s inequality and \eqref{nedeq}, we obtain
	\begin{equation*}
	\begin{split}
	\mu(\tilde{w}-I_h(\tilde{w}))||\tilde{w}|\leq\mu ch^2\|\tilde{w}\|^2+\frac{\mu|\tilde{w}|^2}{4}\leq \frac{\nu}{2}\|\tilde{w}\|^2+\frac{\mu|\tilde{w}|^2}{4}.
	\end{split}
	\end{equation*}
	Applying the above estimates and \eqref{nedeq}, we obtain
	\begin{equation}\label{mmmbop2}
	\begin{split}
	\frac{d}{dt}|\tilde{w}|^2+\frac{\mu}{2}|\tilde{w}|^2&\leq 0.
	\end{split}
	\end{equation}
	Now using Gronwall's inequality on the interval $[0,t]$ and using the fact that $\tilde{w}(0)=0$, we obtain
	\begin{equation}
	\begin{split}
	|\tilde{w}(t)|^2= 0.
	\end{split}
	\end{equation}
    We hence have uniqueness.
        
        \end{proof}
        
    \subsection{Synchronization}
    We now show that the data assimilated velocity asymptotically approaches the actual velocity.
    \begin{theorem}\label{Sync}
           Let $u$ be as in \eqref{3dnav} such that $u(0)\in V$, $M_h$ be defined by \eqref{mh} and $w$ be as in \eqref{heq}. Also, let $\tilde{w}=u-w.$
           Assume that there exists $h>0$ such that
	    \begin{equation}
	        \max\left\{\nu\lambda_1,\frac{cW_h^4}{\nu^3},\frac{cW_h|f|}{\nu^2}\right\}\leq\frac{\nu}{4ch^2}, \quad \text{where} \quad W_h^2=\frac{c}{\nu^2\lambda_1} |f|^2 + M_h^2.
	    \end{equation}
	    Let $\mu$ be chosen such that
	    \begin{equation}\label{munodalcond2}
	         \max\left\{\nu\lambda_1,\frac{cW_h^4}{\nu^3},\frac{cW_h|f|}{\nu^2}\right\}\leq\mu\leq\frac{\nu}{4ch^2}.
	    \end{equation}
	    Then $\tilde{w}$ is regular and $\|\tilde{w}(t)\|\leq \|u(0)\|e^{-\mu t/2}.$ In particular, if we have $T=\infty$, then $$\lim_{t\to \infty}\|\tilde{w}(t)\|=0.$$
        \end{theorem}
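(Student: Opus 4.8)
Here is the plan I would follow.

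The idea is to estimate the error $\tilde w:=u-w$ directly in the $V$-norm, exploiting that the assimilated field $w$ is already under control. Since $u(0)\in V$, the $3$D NSE admits a unique strong solution coinciding with $u$ on a maximal interval $[0,T^{\ast})\subseteq[0,T]$, and on that interval $\tilde w$ is regular because Theorem~\ref{regularity} gives that $w$ is regular on all of $[0,T]$ with $\|w(t)\|\le W_h$ and with $\int_s^{s+1}|Aw|^2\,dt$ controlled by \eqref{awbound}. The plan is to prove quantitative decay of $\|\tilde w(t)\|$ on $[0,T^{\ast})$: this simultaneously bounds $\|u\|\le\|\tilde w\|+\|w\|$, so the strong solution cannot break down, $T^{\ast}=T$, and $\tilde w$ (hence $u$) is regular on $[0,T]$. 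Subtracting \eqref{heq} from \eqref{3dnav}, using the bilinearity identity $B(u,u)-B(w,w)=B(w,\tilde w)+B(\tilde w,w)+B(\tilde w,\tilde w)$ and rewriting $-\mu\tilde I\tilde w=\mu(\tilde w-\tilde I\tilde w)-\mu\tilde w$, the error satisfies
\[
\partial_t\tilde w+\nu A\tilde w+B(w,\tilde w)+B(\tilde w,w)+B(\tilde w,\tilde w)+\mu\tilde w=\mu(\tilde w-\tilde I\tilde w).
\]

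Taking the inner product with $A\tilde w$ (legitimate on $[0,T^{\ast})$) produces $\tfrac12\tfrac{d}{dt}\|\tilde w\|^2+\nu|A\tilde w|^2+\mu\|\tilde w\|^2$ on the left. The three trilinear terms are bounded by \eqref{nolinest1} (each of the shape $c\|a\|\|b\|^{1/2}|Ab|^{1/2}|A\tilde w|$) and Young's inequality: $B(w,\tilde w)$ and $B(\tilde w,\tilde w)$ are absorbed into $\nu|A\tilde w|^2$ at the price of $\tfrac{cW_h^4}{\nu^3}\|\tilde w\|^2$ and $\tfrac{c}{\nu^3}\|\tilde w\|^4\|\tilde w\|^2$ respectively (using $\|w\|\le W_h$), while $B(\tilde w,w)$ costs the genuinely time-dependent quantity $\tfrac{cW_h}{\nu}|Aw(t)|\,\|\tilde w\|^2$. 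The term $\mu(\tilde w-\tilde I\tilde w,A\tilde w)$ is handled by the type-\rom{2} estimate \eqref{intest2} for $\tilde I$ together with the upper bound $\mu\le\nu/(4ch^2)$ of \eqref{munodalcond2} (making $\mu h^2$ small), contributing at most $\tfrac{\nu}{4}|A\tilde w|^2+\varepsilon\mu\|\tilde w\|^2$ for any prescribed $\varepsilon$. Collecting these and absorbing the $|A\tilde w|^2$ contributions into the left-hand side yields a differential inequality of the form
\[
\frac{d}{dt}\|\tilde w\|^2+\Bigl(\mu-\frac{c\|\tilde w\|^4}{\nu^3}-\frac{cW_h^4}{\nu^3}-\frac{cW_h}{\nu}|Aw(t)|\Bigr)\|\tilde w\|^2\le0.
\]

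To close this I would argue by continuation on $[0,T^{\ast})$, using as barrier $\|\tilde w(t)\|\le\|u(0)\|$ (true at $t=0$). On the set where it holds, the hypothesis $\tfrac{cW_h^4}{\nu^3}\le\mu$ of \eqref{munodalcond2} together with $\tfrac{c\|u(0)\|^4}{\nu^3}\le\mu$ --- the latter available once $h$ is taken small enough that $W_h$ dominates $\|u(0)\|$, which is permitted because $u(0)\in V$ and the statement only asks for the \emph{existence} of such an $h$ --- keeps the $\|\tilde w\|^4$- and $W_h^4$-terms together below a small fraction of $\mu$, so that, writing $g(t):=\tfrac{cW_h}{\nu}|Aw(t)|$, the inequality becomes $\tfrac{d}{dt}\|\tilde w\|^2+\mu\|\tilde w\|^2\le 2g(t)\|\tilde w\|^2$ and Gronwall gives $\|\tilde w(t)\|^2\le\|u(0)\|^2\exp\!\bigl(-\mu t+2\int_0^t g\bigr)$. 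The decisive step --- and exactly the reason \eqref{munodalcond2} carries the extra condition $\tfrac{cW_h|f|}{\nu^2}\le\mu$ that is absent from Theorem~\ref{regularity} --- is controlling $\int_0^t g$: summing \eqref{awbound} over unit intervals and applying Cauchy--Schwarz gives $\int_0^t|Aw|\,ds\le ct\bigl(\tfrac{|f|}{\nu}+(\mu M_h^2/\nu)^{1/2}\bigr)$, hence $\int_0^t g\le\tfrac{cW_h|f|}{\nu^2}t+\tfrac{cW_hM_h\mu^{1/2}}{\nu^{3/2}}t$, and by the two inequalities in \eqref{munodalcond2} (with $M_h\le W_h$, and the dimensionless constant taken appropriately large) this is $\le\tfrac{\mu}{4}t$, so the exponent is strictly negative; a tight accounting of the absolute constants at each lossy step then delivers the stated rate $\|\tilde w(t)\|\le\|u(0)\|e^{-\mu t/2}$, whence the threshold is never saturated, $T^{\ast}=T$, $\tilde w$ is regular on $[0,T]$, and the $T=\infty$ case gives $\|\tilde w(t)\|\to0$. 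The main obstacle is precisely the term $B(\tilde w,w)$ tested against $A\tilde w$: in three dimensions every estimate of it seems to require $|Aw|$, a second-order norm of the assimilated solution, for which only the time-averaged bound \eqref{awbound}, not a pointwise one, is available, and turning this averaged bound into control of the Gronwall exponent is what forces the new smallness condition involving $|f|$; a secondary difficulty is making the continuation argument airtight, i.e.\ checking that $\|u(0)\|$ really lies below the barrier (via the choice of small $h$) and that the strong solution does not lose regularity before time $T$.
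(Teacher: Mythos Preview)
Your proposal follows essentially the same route as the paper: test the error equation against $A\tilde w$, bound the three trilinear pieces via \eqref{nolinest1} (in particular isolating the $\tfrac{cW_h}{\nu}|Aw|\,\|\tilde w\|^2$ contribution from $B(\tilde w,w)$), absorb the interpolation term using $\mu\le\nu/(4ch^2)$, run a continuation argument with a barrier on $\|\tilde w\|$, and close Gronwall by controlling $\int_0^t|Aw|$ through \eqref{awbound}, which is exactly where the extra hypothesis $\tfrac{cW_h|f|}{\nu^2}\le\mu$ enters. The only notable discrepancy is your side remark about ``taking $h$ small enough'' to force $\|u(0)\|\le W_h$: the theorem hypothesis fixes $h$, so this freedom is not actually available --- the paper's proof uses the barrier $2\|u(0)\|$ and simply invokes \eqref{munodalcond2} at that step without further justification, so neither argument is fully explicit here.
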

        \begin{proof}
        Note that since $u(0)\in V$, we have existence of strong solution up until some time $T_0$.
        Subtracting \eqref{3dnav1} from \eqref{heq1} and taking the inner product with $A\tilde{w}$, we obtain
	    \begin{equation}\label{navest2}
	    \begin{split}
	    \frac{1}{2}\frac{d}{dt}\|\tilde{w}\|^2 +\nu|A\tilde{w}|^2=&-(B(\tilde{w},w),Aw)-(B(\tilde{w},\tilde{w}),Aw)-(B(w,\tilde{w}),Aw)\\
	    &+\mu(\tilde{w}-\tilde{I}\tilde{w},A\tilde{w})-\mu\|\tilde{w}\|^2
	    \end{split}
	    \end{equation}
	    We bound each term below.
	    First, applying \eqref{nolinest1} and Young’s inequality, we have
	    \begin{align*}
	    |(B(\tilde{w},w),A\tilde{w})|&\leq c\|\tilde{w}\|\|w\|^{1/2}|Aw|^{1/2}|A\tilde{w}|\leq \frac{cW_h}{\nu}\|\tilde{w}\|^2|Aw|+\frac{\nu}{8}|A\tilde{w}|^2.\\
	    |(B(\tilde{w},\tilde{w}),A\tilde{w})|&\leq c\|\tilde{w}\|^{3/2}|A\tilde{w}|^{3/2}\leq \frac{c}{\nu^3}\|\tilde{w}\|^6+\frac{\nu}{8}|A\tilde{w}|^2.\\
	    |(B(w,\tilde{w}),A\tilde{w})|&\leq c\|w\|\|\tilde{w}\|^{1/2}|A\tilde{w}|^{3/2}\leq \frac{cW_h^4}{\nu^3}\|\tilde{w}\|^2+\frac{\nu}{8}|A\tilde{w}|^2.
    	\end{align*}
	    Next, from \eqref{intest}, \eqref{nodh}, Cauchy-Schwartz, Young’s inequality and the second inequality in \eqref{munodalcond1}, we have
    	\begin{align*}
	    |\mu(\tilde{w}-\tilde{I}\tilde{w},A\tilde{w})|&\leq \mu|\tilde{w}-I_h\tilde{w}|\dot|A\tilde{w}|+\mu|\tilde{I}\tilde{w}-I_h\tilde{w}|\dot|A\tilde{w}|\\
	    &\leq\frac{\mu^2 ch^2}{\nu}\|\tilde{w}\|^2+\frac{\mu^4ch^6}{\nu^3}\|\tilde{w}\|^2+\frac{\nu}{4}|A\tilde{w}|^2\\
	    &\leq \frac{\mu}{4}\|\tilde{w}\|^2+\frac{\nu}{4}|A\tilde{w}|^2
    	\end{align*}
    	Inserting the above estimate into \eqref{navest2}, we obtain
    	\begin{equation}
    	    \frac{d}{dt}\|\tilde{w}\|^2+\left(\mu-\frac{c}{\nu^3}\|\tilde{w}\|^4-\frac{cW_h}{\nu}|Aw|\right)\|\tilde{w}\|^2+\nu |A\tilde{w}|^2\leq 0
    	\end{equation}
    	Let $[0,T_1]$ be the maximal interval on which $\|\tilde{w}(t)\|\le 2\|u(0)\|$ holds for $t\in[0,T_1]$.
    Note that $T_1>0$ exists because we have $\tilde{w}(0)=\|u(0)\|\in V$. Assume  that $T_1 < T$. Then by continuity, we must have $\|\tilde{w}(T_1)\|=2\|u(0)\|$.
    Using \eqref{munodalcond2}, for all $t \in [0,T_1]$, we obtain
    \begin{equation}
        \frac{d}{dt}\|\tilde{w}\|^2 + \left(\frac{\mu}{2}-\frac{cW_h}{\nu}|Aw|\right)\|\tilde{w}\|^2 \le 0.
    \end{equation}
     Applying the Gronwall inequality, \eqref{awbound} and \eqref{munodalcond2}, we immediately obtain
    \begin{equation}\label{exx1}
    \|\tilde{w}(t)\|^2 \le \|\tilde{w}(0)\|e^{-t\left(\mu-\frac{cW_h^2\sqrt{\mu}}{\nu}-\frac{cW_h|f|}{\nu^2}\right)}\le\|\tilde{w}(0)\|e^{-\mu t/2}\le\|u(0)\| ~~\forall t \in [0,T_1].
    \end{equation}
    This contradicts the fact that $\|w(T_1)\|=2\|u(0)\|$. Therefore we conclude that $T_1\ge T$
    and consequently, $\|\tilde{w}(t)\| \le 2\|u(0)\|$ for all $t \in [0,T]$.
        \end{proof}
    
    \section{Regularity Criterion For The 3D NSE}
    From Theorem \ref{regularity} we obtain regularity of $w$ while Theorem \ref{Sync} provides us with the regularity of $\tilde{w}=u-w$. Both of these results are obtained when the data obtained for the mesh size $h$ satisfies certain conditions. Using the triangle inequality $\|u\|\leq\|w-u\|+\|w\|$, we can combine the results to obtain a regularity criterion for the solution $u$ of the 3D NSE, which is stated below:
    
    \begin{theorem}\label{regcond}
           Let $u$ be a weak solution to the 3D NSE given by \eqref{3dnav} such that $u(0)\in V$. Let $M_h$ be defined by \eqref{mh}.
           If there exists an $h>0$ such that
	    \begin{equation}\label{regcrit}
	       \max\left\{\nu\lambda_1,\frac{cW_{h}^4}{\nu^3},\frac{cW_h|f|}{\nu^2}\right\}\leq\frac{\nu}{4ch^2}, \quad\text{ where }\quad  W_{h}^2=\frac{c}{\nu^2\lambda_1} |f|^2 + M_{h}^2,
	    \end{equation}
	    then $u$ is regular and $\|u\|\leq W_h$.
        \end{theorem}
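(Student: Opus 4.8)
The statement is a corollary of Theorem~\ref{regularity} and Theorem~\ref{Sync}, and the plan is to combine them through the triangle inequality $\|u\|\le\|u-w\|+\|w\|$ after running the nudging system \eqref{heq} with a well-chosen relaxation parameter. First I would unpack the hypothesis: \eqref{regcrit} asserts precisely that
\[
\max\Big\{\nu\lambda_1,\ \tfrac{cW_h^4}{\nu^3},\ \tfrac{cW_h|f|}{\nu^2}\Big\}\ \le\ \frac{\nu}{4ch^2},
\]
so the closed interval $\big[\max\{\nu\lambda_1,\,cW_h^4/\nu^3,\,cW_h|f|/\nu^2\},\ \nu/(4ch^2)\big]$ is non-empty; I would fix any $\mu$ inside it. For this $\mu$ both the inequalities \eqref{condcond}--\eqref{munodalcond1} needed by Theorem~\ref{regularity} (whose maximum is taken over a subset of the three terms above, hence is no larger) and the inequalities \eqref{munodalcond2} needed by Theorem~\ref{Sync} are automatically satisfied. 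Note that the quantities $W_h$ and $M_h$ appearing in the two theorems are literally the same data-dependent quantities as in \eqref{regcrit}, since the reference trajectory feeding \eqref{heq} is exactly the given weak solution $u$.

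Next I would invoke Theorem~\ref{regularity} with this $\mu$: the data-assimilated solution $w$ of \eqref{heq} (with $w(0)=0$) is regular on $[0,T]$, i.e.\ $w\in L^{\infty}(0,T;V)\cap L^2(0,T;D(A))$, and $\|w(t)\|\le W_h$ for all $t\in[0,T]$. Then, using $u(0)\in V$, I would invoke Theorem~\ref{Sync}: the difference $\tilde w=u-w$ is regular on $[0,T]$ and satisfies $\|\tilde w(t)\|\le\|u(0)\|e^{-\mu t/2}$. Since $u=w+\tilde w$ and the class $L^{\infty}(0,T;V)\cap L^2(0,T;D(A))$ is a vector space, $u$ itself belongs to it, which is exactly the assertion that the weak solution $u$ is regular on $[0,T]$; as $T$ was arbitrary, this also handles the global case $T=\infty$.

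For the quantitative estimate, the triangle inequality gives $\|u(t)\|\le\|w(t)\|+\|\tilde w(t)\|\le W_h+\|u(0)\|e^{-\mu t/2}$ for every $t$, which is the stated bound up to an exponentially decaying correction (and, when $T=\infty$, yields $\limsup_{t\to\infty}\|u(t)\|\le W_h$). The one place that needs a little care — and the only real obstacle in an otherwise bookkeeping-level argument, the substance having been carried out already in Section~3 — is upgrading this to the clean uniform bound $\|u(t)\|\le W_h$: for that I would re-run the maximal-interval/continuation argument of Theorem~\ref{regularity} directly on $u$ (now legitimate, since $u$ is already known to be regular), pairing \eqref{3dnav} with $Au$, using $|(B(u,u),Au)|\le\tfrac{c}{\nu^3}\|u\|^6+\tfrac\nu4|Au|^2$, the Poincaré inequality $|Au|^2\ge\lambda_1\|u\|^2$, and the smallness encoded in \eqref{regcrit} (which forces $h^2\le 1/(4c\lambda_1)$ and $cW_h^4/\nu^3\le\nu/(4ch^2)$); the subtlety is that one must verify the "initial reservoir'' $\|u(0)\|$ is consistent with remaining inside the ball of radius $W_h$ — equivalently, that $M_h$ dominates $\|u(0)\|$ — so in the full generality stated the honest form of the conclusion is $\|u(t)\|\le W_h+\|u(0)\|e^{-\mu t/2}$, with $\|u\|\le W_h$ understood as its leading-order (asymptotic) content.
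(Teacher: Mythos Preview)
Your approach is essentially identical to the paper's: invoke Theorem~\ref{regularity} and Theorem~\ref{Sync} with a $\mu$ in the admissible interval and combine via the triangle inequality $\|u\|\le\|u-w\|+\|w\|$. You are in fact more careful than the paper about the quantitative conclusion, correctly observing that the triangle inequality yields only $\|u(t)\|\le W_h+\|u(0)\|e^{-\mu t/2}$ rather than the clean uniform bound $\|u\|\le W_h$; the paper simply asserts the latter without addressing the discrepancy.
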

        
    \begin{remark}
        The exact same proof strategy for Theorem \ref{regularity} and Theorem \ref{Sync} can be used to obtain the regularity of $w$ and $\tilde{w}$ when $I_h$ is a type-\rom{1} interpolation operator. Hence, we can obtain an analogous regularity criterion for the solution $u$ of the 3D NSE in the type-\rom{1} interpolation operator case.
    \end{remark}
    We have shown that if $u$ satisfies the criterion given in Theorem \ref{regcond}, then it is regular. Now the question arises : what about the converse? Will a regular solution to the 3D NSE satisfy the regularity criterion? The answer, as it turns out, is yes. We prove this in the theorem below.
    We now proceed to prove the theorem.
    \begin{theorem}\label{suff}
          Let $u$ be a regular solution to the 3D NSE  by \eqref{3dnav}and $M_h$ be defined by \eqref{mh}. Then there exists an $h>0$ such that $u$ satisfies \eqref{regcrit}.
    \end{theorem}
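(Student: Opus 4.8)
The plan is to use the definition of regularity directly. Since $u$ is a regular solution on $[0,T]$, we have $K:=\sup_{0\le t\le T}\|u(t)\|<\infty$ (and hence $\sup_{0\le t\le T}|u(t)|\le\lambda_1^{-1/2}K$ by the Poincar\'e inequality \eqref{poincare}), and in addition $u\in L^2(0,T;D(A))$. The only quantity in the criterion \eqref{regcrit} that depends on $h$ is $M_h$, which enters only through $W_h$. Hence it suffices to show that $M_h$ remains bounded by a constant independent of $h$ as $h\to0$: then the left-hand side of \eqref{regcrit} is a fixed finite number while the right-hand side $\nu/(4ch^2)\to\infty$, so every sufficiently small $h$ satisfies \eqref{regcrit}.

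The modal and volume cases are immediate. For modal data, $M_h^2=\sup_t\|P_N u(t)\|^2\le c\sup_t\|u(t)\|^2\le cK^2$ by \eqref{mod}. For volume data, Jensen's inequality gives $\sum_\alpha|Q_\alpha|\,|\bar{u}_\alpha|^2\le|u|^2$, whence $M_h^2\le c\sup_t|u(t)|^2\le c\lambda_1^{-1}K^2$. In both cases $M_h^2\le C_0$ with $C_0$ independent of $h$.

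The nodal case is the only delicate point, and I expect it to be the main obstacle. Here $M_h^2$ is, up to a fixed constant, $\sup_t|I_hu(t)|^2$ for the piecewise-constant nodal interpolant $I_h$ of \eqref{nodint}, since $|I_hv|^2=\sum_\alpha|Q_\alpha|\,|v(x_\alpha)|^2$. There is no bound of the form $|I_hv|\lesssim|v|$ here (in contrast to the modal case), so the bare $L^\infty_tV$ regularity of $u$ is not enough; instead one uses the type-II estimate \eqref{intest2}, giving $|I_hv|\le|v|+c_1h\|v\|+c_2h^2|Av|$ for $v\in D(A)$, and therefore
\[
M_h^2\le C\left(K^2+h^2K^2+h^4L^2\right),\qquad L:=\sup_{0\le t\le T}|Au(t)|.
\]
Since $u$ is regular, $u(t)\in D(A)$ for a.e.\ $t$ and $L<\infty$; this is the one place where $D(A)$-regularity (rather than merely $u\in L^\infty_tV$) is invoked, and also the step requiring care, since $M_h$ involves a supremum over all of $[0,T]$. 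Granting $L<\infty$, for $h\le1$ we again get $M_h^2\le C_0$ with $C_0$ independent of $h$.

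It remains to assemble the pieces. With $M_h^2\le C_0$ for all $h\le1$ we have $W_h^2=(c/\nu^2\lambda_1)|f|^2+M_h^2\le(c/\nu^2\lambda_1)|f|^2+C_0=:W^2$, a constant independent of $h$, and hence
\[
\max\left\{\nu\lambda_1,\;\frac{cW_h^4}{\nu^3},\;\frac{cW_h|f|}{\nu^2}\right\}\le\max\left\{\nu\lambda_1,\;\frac{cW^4}{\nu^3},\;\frac{cW|f|}{\nu^2}\right\}=:\Lambda<\infty.
\]
Choosing any $h$ with $0<h\le1$ and $h^2\le\nu/(4c\Lambda)$ makes $\nu/(4ch^2)\ge\Lambda$, so \eqref{regcrit} holds for this $h$, which proves the theorem.
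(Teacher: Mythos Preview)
Your overall strategy---show that $M_h$ stays bounded uniformly in $h$ and then take $h$ small---is exactly the paper's, and your modal case is correct. However, your volume and nodal arguments contain a scaling error that breaks the proof.

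You claim that $M_h^2$ equals, up to a fixed constant, $|I_hu|_{L^2}^2=\sum_\alpha|Q_\alpha|\,|u(x_\alpha)|^2$ (and similarly for volume averages). But in three dimensions $|Q_\alpha|\sim h^3$, whereas the definition \eqref{mh} reads $M_h^2=Ch\sum_\alpha|u(x_\alpha)|^2$; hence $M_h^2\sim h^{-2}|I_hu|_{L^2}^2$, and the ``fixed constant'' is in fact $h$-dependent. Carrying this factor through, the type-\rom{2} estimate $|I_hu|\le|u|+c_1h\|u\|+c_2h^2|Au|$ only yields
\[
M_h^2\;\lesssim\;h^{-2}|u|^2+\|u\|^2+h^2|Au|^2+\text{cross terms},
\]
and the first term blows up as $h\to0$. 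The same defect appears in your volume argument: Jensen gives $\sum_\alpha|Q_\alpha|\,|\bar u_\alpha|^2\le|u|^2$, but this only delivers $M_h^2\lesssim h^{-2}|u|^2$, which again is unbounded.

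The paper circumvents this by using a \emph{pointwise} oscillation bound rather than an $L^2$-based interpolation estimate. Lemma~\ref{modify} (an Agmon-type inequality on each sub-cube) gives
\[
|u(x_\alpha)-u(x)|^2\le C\,\|\nabla u\|_{L^2(Q_\alpha)}\|Au\|_{L^2(Q_\alpha)}\qquad(x\in Q_\alpha),
\]
with no explicit power of $h$ on the right. One then writes $u(x_\alpha)$ as its deviation from the cube average plus the average itself, squares out, integrates, and sums; the oscillation contribution becomes $c\|u\|\,|Au|$ after Cauchy--Schwarz over $\alpha$, and the average contribution is handled via H\"older and Gagliardo--Nirenberg--Sobolev. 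This produces $M_h^2\le c\bigl(h\sup_t\|u\|\,|Au|+\sup_t\|u\|^2\bigr)$, which is bounded (and in fact decreasing) as $h\to0$. Your observation that $\sup_t|Au(t)|<\infty$ is the key higher-regularity input---not immediate from the definition of a strong solution but a standard consequence, which the paper cites from \cite{fmrt}---is correct and is used in the paper as well.
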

    \begin{proof}
        Looking at \eqref{regcrit}, we may say that since $\nu\lambda_1$ and $|f|$ are fixed and bounded for a given problem, it is sufficient to show that regularity of $u$ implies that $M_h$ is finite. Then the criterion can be satisfied by choosing $h$ sufficiently small.
        Recall that our domain is divided into smaller cubes $Q_\alpha$, indexed by $\alpha$. Also, each cube $Q_\alpha$ has side length $h$.\\
        First, using Proposition \ref{diffi}, Holder's inequality and Gagliardo-Nirenberg-Sobolev inequality, we obtain
        \begin{equation*}
            \begin{split}
                \sum_{\alpha}|u(x_\alpha)|^2=&\sum_{\alpha}\left(\frac{1}{|Q_\alpha|}\int_{Q_\alpha}u(x_\alpha)dx\right)^2\\
                \leq&~\sum_{\alpha}\left(\frac{1}{|Q_\alpha|}\int_{Q_\alpha}|u(x_\alpha)-u(x)|dx\right)^2+\sum_{\alpha}\left(\frac{1}{|Q_\alpha|}\int_{Q_\alpha}|u(x)|dx\right)^2\\
                &+2\sum_{\alpha}\left(\frac{1}{|Q_\alpha|}\int_{Q_\alpha}|u(x_\alpha)-u(x)|dx\right)\left(\frac{1}{|Q_\alpha|}\int_{Q_\alpha}|u(x)|dx\right)\\
                \leq&~ \frac{1}{h^3}\sum_{\alpha}\int_{Q_\alpha}|u(x_\alpha)-u(x)|^2dx+\frac{1}{h}\sum_{\alpha}\|u\|^2_{H^1(Q_\alpha)}\\
                &+\sum_{\alpha}\frac{2}{h^6}\left(\int_{Q_\alpha}|u(x_\alpha)-u(x)|^2dx\right)^{1/2}\sum_{\alpha}\left(\int_{Q_\alpha}|u(x)|^2dx\right)^{1/2}\\
                \leq&~ c\|u\||Au|+\frac{1}{h}\|u\|^2
            \end{split}
        \end{equation*}
        From \eqref{mh} and the above estimate, we have
        \begin{equation*}
        \begin{split}
            M_h^2=ch\sup_{t\in[0,T]}\sum_{\alpha}|u(x_\alpha)|^2&\leq c\left(h\sup_{t\in[0,T]}\|u(t)\||Au(t)|+\sup_{t\in[0,T]}\|u(t)\|\right)
        \end{split}
        \end{equation*}
        Since $u$ is regular, $\displaystyle\sup_{t\in[0,T]}\|u(t)\|$ is bounded. Also, from \cite{fmrt}, we have that $\displaystyle\sup_{t\in[0,T]}|Au(t)|$ is bounded. Hence $M_h$ is finite and its value decreases with $h$. Hence, by choosing a small enough $h$, condition \eqref{regcrit} can be satisfied.
    \end{proof}
    \begin{remark}
        Theorem \ref{regcond} gives us a regularity criterion for the 3D NSE based on \emph{finitely observed data}. All the other existing regularity criteria for the 3D NSE, to the best of our knowledge, require knowing $u$ or $p$ almost everywhere in the domain.
        
        Additionally, Theorem \ref{regcond} and Theorem \ref{suff} together prove that condition \eqref{regcrit} is \emph{both necessary and sufficient for the regularity of} $u$.
    \end{remark}
    \section{Determining Nodes}
    Lastly, we look into the \emph{asymptotically determining} aspect of nodal interpolation. Using arguments similar to the uniqueness part of Theorem \ref{regularity}, we observe that if $h$ satisfies \eqref{regcrit}, then $h$ is \emph{asymptotically determining}. 
    
     \begin{theorem}\label{asympdet}
           Let $u_1$ and $u_2$ be two weak solutions of \eqref{3dnav} and let $M_{h,i}$ be defined by \eqref{mh} for $u_i$, $i=1,2$. Assume that there exists $h>0$ such that
	    \begin{equation}\label{nedeq}
	        \max\left\{\nu\lambda_1,\frac{cW_{h,i}^4}{\nu^3},\frac{cW_{h,i}|f|}{\nu^2}\right\}\leq\frac{\nu}{4ch^2}, \quad\text{ where }\quad  W_{h,i}^2=\frac{c}{\nu^2\lambda_1} |f|^2 + M_{h,i}^2.
	    \end{equation}
	   If
	    \begin{equation}\label{inq}
	        \lim_{t\to\infty}|\tilde{I}_h(u_1-u_2)|=0,
	    \end{equation}
	    then
	    \begin{equation*}
	    \lim_{t\to\infty}|u_1-u_2|=0.
	    \end{equation*}
        \end{theorem}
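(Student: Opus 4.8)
The plan is to derive the $L^2$-synchronization of $u_1$ and $u_2$ from a Gronwall estimate on $\tilde w:=u_1-u_2$ in the $V$-norm, run exactly as in the proofs of Theorems~\ref{regularity} and \ref{Sync}, with the nodal interpolant $\tilde I_h\tilde w$ playing the role of a vanishing external forcing. The first step is to upgrade the two solutions: the hypothesis \eqref{nedeq} is precisely the regularity criterion \eqref{regcrit} applied to each $u_i$ separately, so Theorem~\ref{regcond} gives that both $u_1,u_2$ are regular on $[0,\infty)$ with $\|u_i(t)\|\le W_{h,i}\le W_h:=\max\{W_{h,1},W_{h,2}\}$ for all $t$, and, as in the proof of Theorem~\ref{suff} (cf.\ \cite{fmrt}), $\sup_{s\ge0}\int_s^{s+1}|Au_i(t)|^2\,dt\le K$ for some $K=K(W_h,\nu,|f|)$. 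In particular $\tilde w$ is regular, so $\tilde w(t)\in D(A)$ for a.e.\ $t$; this is what makes $\tilde I_h\tilde w$ meaningful and the type-\rom{2} estimate \eqref{intest2} applicable to $\tilde w(t)$.

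Subtracting the two copies of \eqref{3dnav} and using $B(u_1,u_1)-B(u_2,u_2)=B(u_1,\tilde w)+B(\tilde w,u_2)$ yields $\partial_t\tilde w+\nu A\tilde w+B(u_1,\tilde w)+B(\tilde w,u_2)=0$, which has no cubic term. To cast this in the form already analyzed, I would fix $\mu$ in the interval $[\max\{\nu\lambda_1,\,cW_h^4/\nu^3,\,cW_h|f|/\nu^2\},\,\nu/(4ch^2)]$ — nonempty precisely by \eqref{nedeq} — add $\mu\tilde w$ to both sides and split $\mu\tilde w=\mu\tilde I_h\tilde w+\mu(\tilde w-\tilde I_h\tilde w)$. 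Testing with $A\tilde w$, the quadratic terms $(B(u_1,\tilde w),A\tilde w)$ and $(B(\tilde w,u_2),A\tilde w)$ are handled by \eqref{nolinest1} and Young exactly as in Theorem~\ref{Sync} (they produce $\tfrac{cW_h^4}{\nu^3}\|\tilde w\|^2$, $\tfrac{cW_h}{\nu}|Au_2|\,\|\tilde w\|^2$, and absorbable multiples of $\nu|A\tilde w|^2$); the term $\mu(\tilde w-\tilde I_h\tilde w,A\tilde w)$ is bounded via \eqref{intest2}, its $h\|\tilde w\|$ and $h^2|A\tilde w|$ pieces being absorbed into $\tfrac\mu4\|\tilde w\|^2$ and a multiple of $\nu|A\tilde w|^2$ using $\mu h^2\le\nu/(4c)$ from \eqref{nedeq}; and $\mu(\tilde I_h\tilde w,A\tilde w)\le\tfrac{\mu^2}{\nu}|\tilde I_h\tilde w|^2+\tfrac\nu4|A\tilde w|^2$. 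Using the lower bound on $\mu$ to absorb the $\tfrac{cW_h^4}{\nu^3}\|\tilde w\|^2$ term then leaves a differential inequality of the form
\[
\frac{d}{dt}\|\tilde w\|^2+\Bigl(\tfrac\mu2-\tfrac{cW_h}{\nu}|Au_2(t)|\Bigr)\|\tilde w\|^2\le\tfrac{c\mu^2}{\nu}\,|\tilde I_h\tilde w(t)|^2 .
\]

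To finish, integrate by Gronwall. Since $\int_s^t|Au_2|\,d\tau\le\bigl((t-s)(t-s+1)K\bigr)^{1/2}$, the lower bound $\mu\ge cW_h|f|/\nu^2$ (matched to $K$ exactly as in Theorem~\ref{Sync}) gives $\int_s^t\bigl(\tfrac\mu2-\tfrac{cW_h}{\nu}|Au_2|\bigr)\,d\tau\ge c_0\mu(t-s)-C_1$ with $c_0>0$, $C_1\ge0$, whence
\[
\|\tilde w(t)\|^2\le e^{C_1}e^{-c_0\mu t}\|\tilde w(0)\|^2+\tfrac{c\mu^2}{\nu}e^{C_1}\!\int_0^t\! e^{-c_0\mu(t-s)}|\tilde I_h\tilde w(s)|^2\,ds .
\]
The first term tends to $0$; for the second, $|\tilde I_h\tilde w(s)|\to0$ by \eqref{inq} and $|\tilde I_h\tilde w|$ is locally square-integrable in time (by regularity of $\tilde w$ and \eqref{intest2}), so the convolution of a vanishing, locally integrable function against the integrable fixed-mass kernel $e^{-c_0\mu\,\cdot}$ also tends to $0$ (split the integral at a time past which $|\tilde I_h\tilde w|^2<\eps$: the tail contributes $O(\eps)$ and the bounded-length remainder decays exponentially in $t$). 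Hence $\|\tilde w(t)\|\to0$, and a fortiori $|u_1(t)-u_2(t)|=|\tilde w(t)|\to0$ by \eqref{poincare}.

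The main obstacle is the nodal term $\mu(\tilde w-\tilde I_h\tilde w,\cdot)$: because $\tilde I_h$ is only a type-\rom{2} interpolant, the discrepancy $\tilde w-\tilde I_h\tilde w$ carries a factor $h^2|A\tilde w|$ that cannot be absorbed by the weaker dissipation $\nu\|\tilde w\|^2$ available in an $H$-level estimate — this is precisely why the argument must be carried out in the $V$-norm (testing with $A\tilde w$), where the stronger dissipation $\nu|A\tilde w|^2$ is on hand. Working at the $V$-level in turn reintroduces the time-dependent Gronwall coefficient $\tfrac{cW_h}{\nu}|Au_2(t)|$, the same feature as in Theorem~\ref{Sync}, so the argument really does depend on the $L^2_{\mathrm{loc}}$-in-time control of $|Au_2|$ from the regularity step and on $\mu$ being large enough (ensured by \eqref{nedeq}) to dominate it on average. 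Everything else is a mechanical transcription of the estimates for Theorems~\ref{regularity} and \ref{Sync}, the single new ingredient being the right-hand side $\mu\tilde I_h\tilde w$, whose decay is hypothesis \eqref{inq}.
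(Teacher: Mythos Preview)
Your argument is correct, but the paper takes a structurally different route. Rather than working directly with $\tilde w=u_1-u_2$, the paper introduces the auxiliary data-assimilated solutions $w_1,w_2$ of \eqref{heq} (driven by $\tilde I_hu_1$ and $\tilde I_hu_2$ respectively), invokes Theorem~\ref{Sync} to obtain $|u_i-w_i|\to0$, and then reduces the claim by the triangle inequality to showing $|w_1-w_2|\to0$. Because the nudging term is already built into the $w_i$-equation, the difference $w_1-w_2$ automatically carries $\mu\bigl(\tilde I_h(u_1-u_2)-\tilde I_h(w_1-w_2)\bigr)$ on the right, with no need to add and subtract $\mu\tilde w$ artificially; and the paper can then test with $w_1-w_2$ itself (an $H$-level estimate, exactly as in the uniqueness part of Theorem~\ref{regularity}), obtaining a Gronwall inequality with \emph{constant} coefficient $\mu/2$ rather than your time-dependent $\tfrac{\mu}{2}-\tfrac{cW_h}{\nu}|Au_2(t)|$. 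Your direct approach is more self-contained---it uses only the regularity criterion Theorem~\ref{regcond} as a black box---but, as you correctly diagnose, the type-\rom{2} nature of $\tilde I_h$ forces you up to the $V$-level, where the bilinear terms reintroduce $|Au_2|$ and hence require the time-averaged Gronwall and the matching of $\mu$ to the $D(A)$-energy budget of $u_2$. The paper's detour through the $w_i$ buries all of that $V$-level work inside the already-proved Theorem~\ref{Sync} and leaves only a clean $H$-level computation at the end.
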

    \begin{proof}
        Let $w_1$ and $w_2$ be the solution for the data assimilated equation \eqref{heq} corresponding to $u_1$ and $u_2$ for $\mu$ satisfying \eqref{munodalcond2} for both $u_1$ and $u_2$. From Theorem \ref{Sync} and \eqref{poincare}, we obtain that 
        \begin{equation}\label{lem}
            \lim_{t\to\infty}|u_i(t)-w_i(t)|^2\leq\lim_{t\to\infty}\frac{1}{\lambda_1}\|u_i(t)-w_i(t)\|^2=0.
        \end{equation}
        Now, let $\tilde{w}=w_1-w_2.$ Then, from \eqref{lem}, it is sufficient to show that $\displaystyle \lim_{t\to\infty}|\tilde{w}(t)|=0.$ $\tilde{w}$ satisfies
        \begin{equation}\label{uniq3}
	    \begin{split}
	\frac{d\tilde{w}}{dt}+\nu A\tilde{w}_n+B(\tilde{w},w_{1})+B(w_{2},\tilde{w})&=\mu (\tilde{I}_h(u_1-u_2)-I_h(\tilde{w}))
	\end{split}
	\end{equation}
	Taking the inner product of \eqref{uniq3} with $\tilde{w}$, we obtain
	\begin{equation}\label{mmmbop3}
	\begin{split}
	\frac{1}{2}\frac{d}{dt}|\tilde{w}|^2+\nu\|\tilde{w}\|^2+\mu|\tilde{w}|^2&\leq |(B(\tilde{w},w_{1}),\tilde{w}|)|+|\mu(\tilde{w}-I_h(\tilde{w}))||\tilde{w}|+\mu|\tilde{I}_h(u_1-u_2)||\tilde{w}|.
	\end{split}
	\end{equation}
	Bounding each term as in the the proof of uniqueness for Theorem \ref{regularity} and applying \eqref{nedeq}, we obtain
	\begin{equation}\label{mmmbop4}
	\begin{split}
	\frac{d}{dt}|\tilde{w}|^2+\frac{\mu}{2}|\tilde{w}|^2&\leq 2\mu|\tilde{I}_h(u_1-u_2)|.
	\end{split}
	\end{equation}
	Now using Gronwall's inequality on the interval $[\sigma,t]$, we obtain
	\begin{equation}
	\begin{split}
	|\tilde{w}(t)|^2&\leq |\tilde{w}(\sigma)|^2e^{-\frac{\mu}{2}(\sigma-t)}+4\sup_{t\in[\sigma,t]}|\tilde{I}_h(u_1-u_2)|.
	\end{split}
	\end{equation}
	First letting $t\to\infty$, and then applying \eqref{inq} while letting $\sigma\to\infty$ , we obtain
	\begin{equation}\label{fi}
	   \lim_{t\to\infty}| \tilde{w}(t)|^2= 0
	\end{equation}
        \end{proof}
        
        \begin{remark}
            In simpler words, the above theorem states that if you choose to observe two velocity function at sufficiently large number of points and they approach each other asymptotically at these points, then they approach each other asymptotically everywhere. This means if you construct an approximation of the actual function by observing it at sufficiently many points, the approximation must asymptotically approaches the actual function. This is the central idea of data assimilation.
        \end{remark}

\section{Appendix}
    We utilize this section to establish  a few key inequalities regarding the nodal interpolation operator. 
    
    In order to prove the next proposition, we first state and prove the following lemma which is a modification of Proposition A.1. in \cite{AFMTDown}.
    
    \begin{lemma}\label{modify}
        Let $0<l$ and $Q$ be a cube $[0,l]^3\subset\mathbb{R}^3.$ Then for $\phi\in H^2(Q)$ and $(x_1,y_1,z_1),(x_2,y_2,z_2)\in Q$, we have
        \begin{equation*}
            |\phi(x_1,y_1,z_1)-\phi(x_2,y_2,z_2)|\leq C\|\nabla \phi\|_{L^2(Q)}^{1/2}\|A\phi\|_{L^2(Q)}^{1/2}.
        \end{equation*}
    \end{lemma}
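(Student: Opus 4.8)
The plan is to reduce the two-point difference $\phi(x_1,y_1,z_1)-\phi(x_2,y_2,z_2)$ to a chain of one-dimensional increments and then estimate each increment with the one-dimensional Agmon inequality, the geometric-mean splitting of the right-hand side emerging once the three coordinate directions are combined. By density it suffices to treat $\phi\in C^\infty(\overline Q)$. Since adding a constant to $\phi$ changes neither $|\phi(P_1)-\phi(P_2)|$ nor $\|\nabla\phi\|_{L^2(Q)}$ nor $\|A\phi\|_{L^2(Q)}$, I would first normalize $\phi$ so that $\int_Q\phi=0$. A quick homogeneity count shows that both sides of the claimed inequality scale the same way under $x\mapsto x/l$, so one may either rescale to the unit cube or keep $l$ explicit and check at the end that its powers cancel.

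Writing $P_1=(x_1,y_1,z_1)$ and $P_2=(x_2,y_2,z_2)$ and telescoping through the corners $(x_2,y_1,z_1)$ and $(x_2,y_2,z_1)$ gives
\[
\phi(P_1)-\phi(P_2)=\int_{x_2}^{x_1}\partial_1\phi(s,y_1,z_1)\,ds+\int_{y_2}^{y_1}\partial_2\phi(x_2,s,z_1)\,ds+\int_{z_2}^{z_1}\partial_3\phi(x_2,y_2,s)\,ds .
\]
Each term is the integral of a single first derivative of $\phi$ along a segment of length at most $l$; by Cauchy--Schwarz the first is bounded by $l^{1/2}\bigl(\int_0^l|\partial_1\phi(s,y_1,z_1)|^2\,ds\bigr)^{1/2}$, and similarly for the other two.

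The heart of the argument is to control the line energy $\int_0^l|\partial_1\phi(s,y_1,z_1)|^2\,ds$ uniformly over the transverse point $(y_1,z_1)\in[0,l]^2$. For this I would use the one-dimensional Agmon inequality $\|g\|_{L^\infty(0,l)}^2\le l^{-1}\|g\|_{L^2(0,l)}^2+2\|g\|_{L^2(0,l)}\|g'\|_{L^2(0,l)}$, applied first in the $y$-variable and then in the $z$-variable: each application replaces a pointwise dependence on one transverse coordinate by an $L^2$-average over $Q$ at the price of one extra derivative of $\phi$, and, crucially, once the resulting suprema are integrated back in $s$ one only ever encounters $\|\partial_1\phi\|_{L^2(Q)}$ and the mixed second derivatives $\|\partial_j\partial_1\phi\|_{L^2(Q)}$ ($j=2,3$), never a third derivative and never an $L^4$ quantity. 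Substituting back into the telescoping identity and pairing the first- and second-derivative factors by Cauchy--Schwarz produces the term $\|\nabla\phi\|_{L^2(Q)}^{1/2}\|D^2\phi\|_{L^2(Q)}^{1/2}$, which is $\|\nabla\phi\|_{L^2(Q)}^{1/2}\|A\phi\|_{L^2(Q)}^{1/2}$ in the notation of the lemma; the lower-order remainders are handled by the Poincar\'e inequality on $Q$ furnished by the zero-mean normalization, and the powers of $l$ cancel.

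I expect the transverse control to be the genuine obstacle: in three dimensions a point value of $\phi$ sits at the borderline of $H^2$-regularity, so one cannot conclude with a single Sobolev or Agmon embedding on $Q$, but must carry out the two transverse integrations by hand in the spirit of the one-dimensional construction in \cite{AFMTDown}, with enough book-keeping that only first and second derivatives ever appear, the lower-order terms are genuinely absorbed, and the final constant is dimensionless. A cleaner alternative that makes the $\tfrac12,\tfrac12$ split transparent is to extend the mean-zero $\phi$ to a periodic $H^2$ function of comparable norm, bound $\sup_Q|\phi|\le\|\widehat\phi\|_{\ell^1}$, split the frequency sum at a radius $R$, estimate the low part by Cauchy--Schwarz against $\|\nabla\phi\|_{L^2}$ and the high part against $\|A\phi\|_{L^2}$, and optimize in $R$; there the obstacle becomes producing the extension with an $l$-independent norm bound.
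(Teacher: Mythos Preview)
Your plan is essentially the paper's own proof: the paper telescopes $\phi(P_1)-\phi(P_2)$ through the same coordinate chain, bounds each one-dimensional increment by Cauchy--Schwarz, and then controls the resulting line-$L^2$ norm of a first derivative via a fundamental-theorem-of-calculus identity for $\psi^2$ integrated over $Q$ (the inequality \eqref{needint}), which is exactly the one-dimensional Agmon estimate you describe, carried out in the two transverse variables simultaneously so that only first and second derivatives of $\phi$ enter. The only point of divergence is the absorption of the lower-order remainder $l^{-1}\|\nabla\phi\|_{L^2(Q)}^2$: the paper invokes $\|\nabla\phi\|_{L^2(Q)}\le Cl\|A\phi\|_{L^2(Q)}$ directly, whereas your zero-mean normalization of $\phi$ gives Poincar\'e for $\phi$ itself but not for $\nabla\phi$, so as stated it does not control that term---you would still need the paper's step (or an equivalent gradient Poincar\'e) at that point.
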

    \begin{proof}
        First consider $\psi=\psi(x,y,z)\in C^\infty(Q)$ and let $\tilde{y},\tilde{z}\in[0,l]$ and $\tilde{z}\in[0,l]$. Without loss of generality, assume $\tilde{y}$ and $\tilde{z}$ are closer to $0$ than $l$, i.e., 
        \begin{equation}\label{tildaineq}
            \tilde{y}\leq l-\tilde{y}~\text{ and }~\tilde{z}\leq l-\tilde{z}.
        \end{equation}
        Then, for every $x\in [0,l]$, $y\in[\tilde{y},h]$ and $z\in[\tilde{z},h]$, we have
        \begin{equation}
            \psi^2(x,\tilde{y},\tilde{z})=\psi^2(x,y,z)-\int_{\tilde{y}}^y\frac{\partial \psi^2(x,s,z)}{\partial y}ds-\int_{\tilde{z}}^z\frac{\partial \psi^2(x,\tilde{y},s)}{\partial z}ds
        \end{equation}
        Integrating with respect to $x$, $y$ and $z$ over $[0,l]\times [\tilde{y},l]\times[\tilde{z},l]$, we obtain
        \begin{equation}
            (l-\tilde{y})(l-\tilde{z})\int_0^l\psi^2(x,\tilde{y},\tilde{z})dx\leq \|\psi\|_{L^2(Q)}^2+2(l-\tilde{y})\|\psi\|_{L^2(Q)}\left\|\frac{\partial\psi}{\partial y}\right\|_{L^2(Q)}^2+2(l-\tilde{z})\|\psi\|_{L^2(Q)}\left\|\frac{\partial\psi}{\partial z}\right\|_{L^2(Q)}^2.
        \end{equation}
        Using \eqref{tildaineq}, we observe that 
        \begin{equation*}
            l-\tilde{y}\geq l/2>0 ~\text{ and }~l-\tilde{z}\geq l/2>0.
        \end{equation*}
        Therefore
        \begin{equation}\label{needint}
            \int_0^l\psi^2(x,\tilde{y},\tilde{z})dx\leq \frac{4}{l^2}\|\psi\|_{L^2(Q)}^2+\frac{4}{l}\|\psi\|_{L^2(Q)}\left\|\frac{\partial\psi}{\partial y}\right\|_{L^2(Q)}^2+\frac{4}{l}\|\psi\|_{L^2(Q)}\left\|\frac{\partial\psi}{\partial z}\right\|_{L^2(Q)}^2.
        \end{equation}
        For the case $l-\tilde{y}<l$, we choose $y\in [0,\tilde{y}]$ instead and proceed analogously. Similarly, for $l-\tilde{z}<l$, we choose $z\in [0,\tilde{z}]$. Also, since $\psi\in C^\infty(Q)$, by density, \eqref{needint} is also valid for every $\psi\in H^1(Q).$
        Now, let $\phi\in C^\infty(Q)$ and $(x_1,y_1,z_1),(x_2,y_2,z_2)\in Q$. By triangle inequality,
    \begin{equation}\label{q1}
    \begin{split}
        |\phi(x_1,y_1,z_1)-\phi(x_2,y_2,z_2)|\leq |\phi(x_1,y_1,z_1)-\phi(x_1,y_1,z_2)|&+|\phi(x_1,y_1,z_2)-\phi(x_1,y_2,z_2)|\\
        &+|\phi(x_1,y_2,z_2)-\phi(x_2,y_2,z_2)|.
    \end{split}
    \end{equation}
    Note that 
    \begin{equation}\label{hold}
        |\phi(x_1,y_1,z_1)-\phi(x_1,y_1,z_2)|=\left|\int_{z_1}^{z_2}\frac{\partial\phi}{\partial z}ds\right|\leq l^{1/2}\left\|\frac{\partial\phi}{\partial z}\right\|_{L^2(Q)}.
    \end{equation}
    Hence applying \eqref{needint} to \eqref{hold} with $\psi=\partial\phi/\partial z$, we obtain
    \begin{equation}\label{q2}
        |\phi(x_1,y_1,z_1)-\phi(x_2,y_2,z_2)|\leq \left(\frac{4}{l^2}\|\nabla\phi\|_{L^2(Q)}^2+\frac{8}{l}\|\nabla\phi\|_{L^2(Q)}\left\|A\phi\right\|_{L^2(Q)}^2\right)^{1/2}
    \end{equation}
    Similarly, we obtain
    \begin{equation}\label{q3}
        \begin{split}
            |\phi(x_1,y_1,z_2)-\phi(x_2,y_2,z_2)|&\leq \left(\frac{4}{l^2}\|\nabla\phi\|_{L^2(Q)}^2+\frac{8}{l}\|\nabla\phi\|_{L^2(Q)}\left\|A\phi\right\|_{L^2(Q)}^2\right)^{1/2}\\
            |\phi(x_1,y_2,z_2)-\phi(x_2,y_2,z_2)|&\leq \left(\frac{4}{l^2}\|\nabla\phi\|_{L^2(Q)}^2+\frac{8}{l}\|\nabla\phi\|_{L^2(Q)}\left\|A\phi\right\|_{L^2(Q)}^2\right)^{1/2}
        \end{split}
    \end{equation}
    Combining \eqref{q1}, \eqref{q2} and \eqref{q3} and using the density of $C^\infty(Q)$ in $H^2(Q)$, we have 
    \begin{equation*}
            |\phi(x_1,y_1,z_1)-\phi(x_2,y_2,z_2)|\leq \left(\frac{C_1}{l}\|\nabla \phi\|_{L^2(Q)}^2+C_2\|\nabla \phi\|_{L^2(Q)}\|A\phi\|_{L^2(Q)}\right)^{1/2}.
        \end{equation*}
        Now applying the fact that $\|\nabla \phi\|_{L^2(Q)}\leq Cl\|A \phi\|_{L^2(Q)}$, we obtain the statement of the theorem.
    \end{proof}
    \begin{proposition}\label{diffi}
        Let $I$ be a type-\rom{2} interpolation operator and $I_h$ be as in \eqref{volint}. Then 
        \begin{equation}\label{nodh}
            \|\tilde{I}u-I_hu\|_{L^2(\Omega)}^2\leq Ch^3\|u\||Au|.
        \end{equation}
        \end{proposition}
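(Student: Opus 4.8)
The plan is to exploit the partition-of-unity structure of the smoothed nodal interpolant $\tilde{I}$, rather than comparing each bump $\phi_\alpha$ with the indicator $\chi_{Q_\alpha}$ one cube at a time. Since the cubes $\{Q_\alpha\}_{\alpha\in\mathcal{J}}$ tile $\Omega$ and $\int\rho_\epsilon=1$, one has $\sum_{\alpha\in\mathcal{J}}\phi_\alpha=\rho_\epsilon*\bigl(\sum_\alpha\psi_{Q_\alpha}\bigr)\equiv 1$ on $\Omega$ (the boundary cubes $\alpha\in\mathcal{E}$ contributing only negligible corrections, handled as in the appendix of \cite{AOT}). Fixing $x\in\Omega$ and letting $Q_\beta$ be the cube containing it, so that $I_hu(x)=\bar u_\beta$, I would write $\bar u_\beta=\sum_\alpha\phi_\alpha(x)\,\bar u_\beta$ and subtract to obtain
\[
\tilde{I}u(x)-I_hu(x)=\sum_{\alpha\in\mathcal{J}}\phi_\alpha(x)\,\bigl(u(x_\alpha)-\bar u_\beta\bigr).
\]
Because $\phi_\alpha=\rho_\epsilon*\psi_{Q_\alpha}$ is supported in the $\epsilon$-neighbourhood of $Q_\alpha$ and $\epsilon=h/10$ is smaller than the common side length of the $Q_\alpha$, at most a fixed number (at most $27$) of indices $\alpha$, namely those with $Q_\alpha=Q_\beta$ or $Q_\alpha$ abutting $Q_\beta$, contribute at $x$; moreover $0\le\phi_\alpha\le 1$ and $\sum_\alpha\phi_\alpha(x)=1$.

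The next step is localization. For every contributing $\alpha$, both $x_\alpha$ and each $y\in Q_\beta$ lie in $\widetilde{Q}_\beta$, the cube of side at most $3h$ formed by $Q_\beta$ together with its neighbours. Applying Lemma \ref{modify} componentwise on $\widetilde{Q}_\beta$ (its constant being dimensionless, hence uniform in $h$), followed by Cauchy--Schwarz over the three components and averaging over $y\in Q_\beta$, gives
\[
|u(x_\alpha)-\bar u_\beta|\le\frac{1}{|Q_\beta|}\int_{Q_\beta}|u(x_\alpha)-u(y)|\,dy\le C\,\|\nabla u\|_{L^2(\widetilde{Q}_\beta)}^{1/2}\,\|Au\|_{L^2(\widetilde{Q}_\beta)}^{1/2},
\]
where $\|Au\|_{L^2(\widetilde{Q}_\beta)}$ denotes the $L^2$ norm of $-\Delta u$ over $\widetilde{Q}_\beta$, so that $\|Au\|_{L^2(\Omega)}=|Au|$. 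Summing over the bounded number of contributing indices and using $0\le\phi_\alpha\le 1$, it follows that, for $x\in Q_\beta$,
\[
|\tilde{I}u(x)-I_hu(x)|\le C\,\|\nabla u\|_{L^2(\widetilde{Q}_\beta)}^{1/2}\,\|Au\|_{L^2(\widetilde{Q}_\beta)}^{1/2},
\]
with a right-hand side that depends on $\beta$ only.

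Finally I would square this, integrate over $x\in Q_\beta$ (costing the factor $|Q_\beta|\le Ch^3$), sum over $\beta$, and apply Cauchy--Schwarz:
\[
\|\tilde{I}u-I_hu\|_{L^2(\Omega)}^2\le Ch^3\sum_\beta\|\nabla u\|_{L^2(\widetilde{Q}_\beta)}\,\|Au\|_{L^2(\widetilde{Q}_\beta)}\le Ch^3\Bigl(\sum_\beta\|\nabla u\|_{L^2(\widetilde{Q}_\beta)}^2\Bigr)^{1/2}\Bigl(\sum_\beta\|Au\|_{L^2(\widetilde{Q}_\beta)}^2\Bigr)^{1/2}.
\]
Since every point of $\Omega$ belongs to at most $27$ of the patches $\widetilde{Q}_\beta$, each of the two sums on the right is bounded by a constant times $\|\nabla u\|_{L^2(\Omega)}^2=\|u\|^2$ and $\|Au\|_{L^2(\Omega)}^2=|Au|^2$ respectively, which yields $\|\tilde{I}u-I_hu\|_{L^2(\Omega)}^2\le Ch^3\|u\|\,|Au|$, i.e.\ \eqref{nodh}.

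The point where care is most needed, and the reason for the partition-of-unity rewriting, is recovering the \emph{full} power $h^3$: a cube-by-cube estimate $\|\phi_\alpha-\chi_{Q_\alpha}\|_{L^2}^2\sim h^3$ summed over the $\sim h^{-3}$ cubes loses a factor of $h$, whereas rewriting the difference as $\sum_\alpha\phi_\alpha(x)(u(x_\alpha)-\bar u_\beta)$ turns it into a genuinely small local quantity controlled by Lemma \ref{modify} on an $O(h)$-scale patch. The two subsidiary facts to verify are that the constant in Lemma \ref{modify} is truly scale-invariant (so the lemma applies uniformly on cubes of size comparable to $h$) and that the enlarged patches $\widetilde{Q}_\beta$ overlap with bounded multiplicity; the treatment of the boundary cubes $\alpha\in\mathcal{E}$ in the identity $\sum_\alpha\phi_\alpha\equiv 1$ is routine and can be imported from \cite{AOT,BB}.
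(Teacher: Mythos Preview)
Your argument is correct and follows the same skeleton as the paper's proof: both localize the difference to cubes, invoke Lemma~\ref{modify} to bound the oscillation $|u(x_\alpha)-\bar u_\beta|$ by $C\|\nabla u\|_{L^2}^{1/2}\|Au\|_{L^2}^{1/2}$ on an $O(h)$-scale region, pick up the factor $h^3$ from integrating a constant over a cube, and finish with Cauchy--Schwarz over $\alpha$ (or $\beta$).

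The difference lies in how the two handle the smoothing in $\tilde I$. The paper's proof simply writes
\[
\tilde{I}u-I_hu=\sum_{\alpha}\bigl(u(x_\alpha)-\bar u_\alpha\bigr)\chi_{Q_\alpha},
\]
which is literally the formula for the \emph{piecewise constant} nodal interpolant $I$ of \eqref{nodint} minus $I_h$, and then works on each $Q_\alpha$ separately; the mollification by $\rho_\epsilon$ is silently dropped. Your partition-of-unity rewriting $\tilde{I}u(x)-I_hu(x)=\sum_\alpha\phi_\alpha(x)\bigl(u(x_\alpha)-\bar u_\beta\bigr)$ is the honest way to treat the smoothed operator $\tilde I$: it forces you onto the enlarged patches $\widetilde Q_\beta$ and costs a bounded-overlap argument, but it actually matches the definition \eqref{modef}. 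In effect the paper proves \eqref{nodh} for $I$ rather than $\tilde I$, whereas your version proves it for $\tilde I$ as stated; the extra machinery you introduce (finite overlap, scale-invariance of the lemma's constant) is exactly what is needed to close that gap.
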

        \begin{proof}
            Using \eqref{nodint}, we note that
            \begin{equation}\label{difi}
                \begin{split}
                     \tilde{I}u-I_hu&=\sum_{\alpha\in\mathcal{J}}\left(u(x_\alpha)-\frac{1}{|Q_{\alpha}|}\int_{Q_{\alpha}}u(x)dx\right)\chi_{Q_{\alpha}}\\
                &=\sum_{\alpha\in\mathcal{J}}\left(\frac{1}{|Q_{\alpha}|}\int_{Q_{\alpha}}\left(u(x_\alpha)-u(x)\right)dx\right)\chi_{Q_{\alpha}}.
                \end{split}
            \end{equation}
            Now, using \eqref{difi}, Lemma \eqref{modify} and the fact that 
            \begin{equation}
            	\chi_{Q_{\alpha}}\chi_{Q_{\beta}}=
            	\begin{cases} 
            	0 &\mbox{if } \alpha\neq\beta \\
            	\chi_{Q_{\alpha}} & \mbox{if } \alpha=\beta ,
        	\end{cases} 
	\end{equation}
	we obtain
        \begin{equation*}
            \begin{split}
             \|\tilde{I}u-I_hu\|_{L^2(\Omega)}^2\leq& \left\langle \tilde{I}u-I_hu,\tilde{I}u-I_hu\right\rangle\\
             \leq&\sum_{\alpha\in\mathcal{J}}\int_{Q_\alpha} \frac{1}{|Q_{\alpha}|^2}\left(\int_{Q_{\alpha}}\left(u(x_\alpha)-u(s)\right)ds\right)^2dx\\
             \leq&\sum_{\alpha\in\mathcal{J}}\int_{Q_\alpha} \frac{|Q_{\alpha}|}{|Q_{\alpha}|^2}\left(\int_{Q_{\alpha}}\left|u(x_\alpha)-u(s)\right|^2ds\right)dx\\
             \leq&\sum_{\alpha\in\mathcal{J}}\left|Q_\alpha\right|\left(C\|\nabla u\|_{L^2(Q_{\alpha})}\|Au\|_{L^2(Q_{\alpha})}\right)\\
             \leq&~Ch^3\sum_{\alpha\in\mathcal{J}}\|\nabla u\|_{L^2(Q_{\alpha})}\|Au\|_{L^2(Q_{\alpha})}\\
             \leq &~Ch^3\left(\sum_{\alpha\in\mathcal{J}}\|\nabla u\|_{L^2(Q_{\alpha})}^2\right)^{1/2}\left(\sum_{\alpha\in\mathcal{J}}\|Au\|_{L^2(Q_{\alpha})}^2\right)^{1/2}\\
             \leq&~Ch^3\|u\||Au|
        \end{split}
        \end{equation*}

        \end{proof}


\begin{thebibliography}{99}
	
		\bibitem{asch} M. Asch, M. Bocquet and M. Nodet, \emph{Data Assimilation: Methods, Algorithms, and Applications}, Fundamentals of Algorithms, 11. Society for Industrial and Applied Mathematics (SIAM), Philadelphia, PA, 2016. 
	
	\bibitem{AOT}
	Azouani, A., Olson, E. and Titi, E.S., 2014. Continuous data assimilation using general interpolant observables. Journal of Nonlinear Science, 24(2), pp.277-304.
	
	\bibitem{BKM}
	Beale, J.T., Kato, T. and Majda, A., 1984. Remarks on the breakdown of smooth solutions for the 3-D Euler equations. Communications in Mathematical Physics, 94(1), pp.61-66.
	
	\bibitem{BB}
	Balakrishna, A. and Biswas, A., 2022. Determining Map, Data Assimilation and an Observable Regularity Criterion for the Three-Dimensional Boussinesq System. Applied Mathematics \& Optimization, 86(3), pp.1-53.
	
	\bibitem{AFMTDown}
	Biswas, A., Foias, C., Mondaini, C.F. and Titi, E.S., 2019, March. Downscaling data assimilation algorithm with applications to statistical solutions of the Navier–Stokes equations. In Annales de l'Institut Henri Poincaré C, Analyse non linéaire (Vol. 36, No. 2, pp. 295-326). Elsevier Masson.
	
	\bibitem{bisprice}
	Biswas, A. and Price, R., 2021. Continuous Data Assimilation for the Three-Dimensional Navier--Stokes Equations. SIAM Journal on Mathematical Analysis, 53(6), pp.6697-6723.
	
	\bibitem{ber}
	Berselli, L. and Galdi, G., 2002. Regularity criteria involving the pressure for the weak solutions to the Navier-Stokes equations. Proceedings of the American Mathematical Society, 130(12), pp.3585-3595.
	
	\bibitem{bomi}
	Borchers, W. and Miyakawa, T., 1995. On stability of exterior stationary Navier-Stokes flows. Acta Mathematica, 174(2), pp.311-382.
	
	\bibitem{vicb}
	Buckmaster, T. and Vicol, V., 2019. Nonuniqueness of weak solutions to the Navier-Stokes equation. Annals of Mathematics, 189(1), pp.101-144.
	
	\bibitem{titi}
	Cao, C. and Titi, E.S., 2008. Regularity criteria for the three-dimensional Navier–Stokes equations. Indiana University Mathematics Journal, pp.2643-2661.
	
	\bibitem{chaelee}
	Chae, D. and Lee, J., 1999. Regularity criterion in terms of pressure for the Navier-Stokes equations.
	
	\bibitem{colombo}
	Albritton, D., Brué, E. and Colombo, M., 2022. Non-uniqueness of Leray solutions of the forced Navier-Stokes equations. Annals of Mathematics, 196(1), pp.415-455.
	
	\bibitem{daveiga}
	Beirao Da Veiga, H., 2000. A Sufficient Condition on the Pressure for the Regularity of Weak Solutions to the Navier—Stokes Equations. Journal of Mathematical Fluid Mechanics, 2(2), pp.99-106.
	
	\bibitem{DaVeiga}
	DA VEIGA, B.H., 1995. A new regularity class for the Navier-Stokes equations in $R^n$. Chin. Ann. of Math., 16, pp.407-412.
	
	\bibitem{dV97}
	Da Veiga, H.B., 1997. Remarks on the smoothness of the $L^\infty(0, T; L^3)$ solutions of the 3-D Navier-Stokes equations. Portugal. Math, 54(4), pp.381-391.
	
	\bibitem{Daley1991} R. Daley, \emph{Atmospheric Data Analysis}, Cambridge Atmospheric and Space Science Series, Cambridge University Press, Cambridge (1991).
	
	\bibitem{cf} P. Constantin and C. Foias, \emph{Navier-Stokes Equations}, Chicago Lectures in Mathematics, University of Chicago Press, Chicago, IL, 1988.
	
	\bibitem{fp} 
	Foias, C. and Prodi, G., 1967. Sur le comportement global des solutions non-stationnaires des équations de Navier-Stokes en dimension $2$. Rendiconti del Seminario Matematico della Università di Padova, 39, pp.1-34.
	
	\bibitem{ft} 
	Foias, C. and Temam, R., 1984. Determination of the solutions of the Navier-Stokes equations by a set of nodal values. Mathematics of Computation, 43(167), pp.117-133.
	
	\bibitem{fmrt} 
	Foias, C., Manley, O., Rosa, R. and Temam, R., 2001. Navier-Stokes equations and turbulence (Vol. 83). Cambridge University Press.
	
	\bibitem{fmtr} 
  \newblock Foias, C. and Manley, O. and Temam, R.: 
  \newblock {Attractors for the B{\'e}nard problem: existence and physical bounds on their fractal dimension} 
  \newblock \textit{Nonlinear Analysis: Theory, Methods \& Applications} {\textbf{11}}, no. 8, 939--967(1987).
	
	\bibitem{JT} 
	Jones, D.A. and Titi, E.S., 1992. Determining finite volume elements for the 2D Navier-Stokes equations. Physica D: Nonlinear Phenomena, 60(1-4), pp.165-174.
	
	\bibitem{HMbook2012} J. Harlim and A. Majda, \emph{Filtering Complex Turbulent Systems}, Cambridge University Press, Cambridge, 2012.
	
	\bibitem{he}
	He, C.H.E.N.G., 2002. New sufficient conditions for regularity of solutions to the Navier-Stokes equations. Advances in Mathematical Sciences and Applications, 12(2), pp.535-548.
	
	\bibitem{Kalnay2003}
    \newblock Kalnay, E.
    \newblock \emph{Atmospheric Modeling, Data Assimilation and Predictability.}
    \newblock Cambridge University Press, 2003.

    \bibitem{KLS}
    \newblock D. T. B. Kelly, K.J. H.  Law and A. M. Stuart,
    \newblock Well-posedness and accuracy of the ensemble Kalman 
    filter in discrete and continuous time.
    \newblock \emph{Nonlinearity} (2014), 2579-2603.
	
	\bibitem{Koz96}
	Kozono, H. and Sohr, H., 1996. Remark on uniqueness of weak solutions to the Navier-Stokes equations. Analysis, 16(3), pp.255-272.
	
	\bibitem{Koz97}
	Kozono, H. and Sohr, H., 1997. Regularity criterion of weak solutions to the Navier-Stokes equations. Advances in Differential Equations, 2(4), pp.535-554.
	
	\bibitem{Koz01}
	Kozono, H., 2001. Weak solutions of the Navier-Stokes equations with test functions in the weak-$ L^ n $ space. Tohoku Mathematical Journal, Second Series, 53(1), pp.55-79.
	
	\bibitem{koya98}
	Kozono, H. and Yamazaki, M., 1998. On a larger class of stable solutions to the Navier-Stokes equations in exterior domains. Mathematische Zeitschrift, 228(4), pp.751-785.
	
	\bibitem{kukavica}
	Kukavica, Igor. "Pressure integrability conditions for uniqueness of mild solutions of the Navier–Stokes system." Journal of Differential Equations 223, no. 2 (2006): 427-441.
	
	\bibitem{kukaz}
	Kukavica, I. and Ziane, M., 2005. One component regularity for the Navier–Stokes equations. Nonlinearity, 19(2), p.453.
	
	\bibitem{LSZbook2015}  K. Law, A.M. Stuart and K.C. Zygalakis, \emph{Data Assimilation, A Mathematical Introduction}, Springer, 2015.
	
	\bibitem{Leray} 
	Leray, J., 1934. Sur le mouvement d'un liquide visqueux emplissant l'espace. Acta mathematica, 63(1), pp.193-248.
	
	\bibitem{pokorny}
	Pokorný, M., 2003. On the result of He concerning the smoothness of solutions to the Navier-Stokes equations. Electronic Journal of Differential Equations (EJDE)[electronic only], 2003, pp.Paper-No.
	
	\bibitem{ReichCotterbook2015} 
    \newblock S. Reich and C. Cotter,
    \newblock  \emph{Probabilistic Forecasting and Bayesian Data Assimilation}, Cambridge University Press, Cambridge, 2015.
	
	\bibitem{Serrin}
	Serrin, J., 1963. The initial value problem for the Navier-Stokes equations. Nonlinear Problems Proc. Sympos., Madison, Wis., 1963.
	
	\bibitem{sohr}
	Sohr, H., 2001. A regularity class for the Navier-Stokes equations in Lorentz spaces. Journal of Evolution Equations, 1(4), pp.441-467.
	
	\bibitem{sverak}
	Escauriaza, L., Seregin, G.A. and Sverak, V., 2003. IMG align= ABSMIDDLE alt= $L_{3,\infty}$-solutions of the Navier-Stokes equations and backward uniqueness. Russian Mathematical Surveys, 58(2), pp.211-250.
	
	\bibitem{Temam} R. Temam, \emph{Navier-Stokes Equations. Theory and Numerical Analysis}, Studies in Mathematics and its Applications, 3rd edition, North-Holland Publishing Co., Amsterdam-New York, 1984. Reedition in the AMS Chealsea Series, AMS, Providence, 2001.
	
	\bibitem{vW85}
	Von Wahl, W., 1985. The equations of Navier-Stokes and abstract parabolic equations. Braunschweig: Vieweg.
	
	\bibitem{zhou02}
	Zhou, Y., 2002. A new regularity criterion for the Navier-Stokes equations in terms of the gradient of one velocity component. Methods and applications of analysis, 9(4), pp.563-578.
	
	\bibitem{zhou}
	Zhou, Y., 2006. On a regularity criterion in terms of the gradient of pressure for the Navier-Stokes equations in $\mathbb {R}^{N}$. Zeitschrift für angewandte Mathematik und Physik ZAMP, 57(3), pp.384-392.
	
	
	
	\end{thebibliography}
\end{document}